\newcommand{\diam}{{\rm diam}}
\newcommand{\dist}{{\rm dist}}
\newcommand{\const}[1]{C_{\text{\rm#1}}}
\newcommand{\set}[2]{\big\{#1\,:\,#2\big\}}
\newcommand{\dual}[3][]{#1\langle#2\,,\,#3#1\rangle}
\newcommand{\norm}[3][]{#1\|#2#1\|_{#3}}
\newcommand\K{\mathbb{K}}
\newcommand\N{\mathbb{N}}
\newcommand\R{\mathbb{R}}
\newcommand\MM{\mathcal M}
\newcommand\OO{{\mathcal O}}
\newcommand\PP{\mathcal P}
\newcommand\TT{\mathcal T}
\newcommand\XX{\mathcal X}
\numberwithin{equation}{section}
\numberwithin{figure}{section}
\newtheorem{theorem}{Theorem}[section]
\newtheorem{proposition}[theorem]{Proposition}
\newtheorem{lemma}[theorem]{Lemma}
\newtheorem{corollary}[theorem]{Corollary}
\newtheorem{algorithm}[theorem]{Algorithm}
\newtheorem{remark}[theorem]{Remark}
\newcommand*\patchAmsMathEnvironmentForLineno[1]{%
  \expandafter\let\csname old#1\expandafter\endcsname\csname #1\endcsname
  \expandafter\let\csname oldend#1\expandafter\endcsname\csname end#1\endcsname
  \renewenvironment{#1}%
     {\linenomath\csname old#1\endcsname}%
     {\csname oldend#1\endcsname\endlinenomath}}%
\newcommand*\patchBothAmsMathEnvironmentsForLineno[1]{%
  \patchAmsMathEnvironmentForLineno{#1}%
  \patchAmsMathEnvironmentForLineno{#1*}}%
\renewcommand{\d}{\,{\rm d}}
\newcommand{\n}{{\bf n}}
\newcommand{\x}{{\bf x}}
\newcommand{\y}{{\bf y}}
\newcommand{\1}{\mathbbm{1}}
\DeclareMathOperator{\Ei}{Ei}
\title{Adaptive space-time BEM for the heat equation}
\author{Gregor Gantner}
\address{Korteweg-de Vries Institute for Mathematics\\
University of Amsterdam, P.O. Box 94248, 1090 GE Amsterdam, The Netherlands}
\email{G.Gantner@uva.nl}%
\author{Raymond van Venetië}
\address{Korteweg-de Vries Institute for Mathematics\\
University of Amsterdam, P.O. Box 94248, 1090 GE Amsterdam, The Netherlands}
\email{R.vanVenetie@uva.nl}
\keywords{space-time boundary element method, heat equation, {\sl a posteriori} error estimation, adaptive mesh-refinement, computation of singular integrals}
\subjclass[2010]{35K05, 65D32, 65M15, 65N38, 65N50}
\begin{document}

% Abstract
\begin{abstract}
We consider the space-time boundary element method (BEM) for the heat equation with prescribed initial and Dirichlet data.
We propose a residual-type {\slshape a posteriori} error estimator that is a lower bound and, up to weighted $L_2$-norms of the residual, also an upper bound for the unknown BEM error.
The possibly locally refined meshes are assumed to be prismatic, i.e., their elements are tensor-products $J\times K$ of elements in time $J$ and space $K$.
While the results do not depend on the local aspect ratio between time and space, assuming the scaling $|J| \eqsim \diam(K)^2$ for all elements and using Galerkin BEM,
the estimator is shown to be efficient and reliable without the additional $L_2$-terms.
In the considered numerical experiments on two-dimensional domains in space, the estimator seems to be equivalent to the error, independently of these assumptions.
In particular for adaptive anisotropic refinement, both converge with the best possible convergence rate.
\end{abstract}

% Make title
\date{\today}
\maketitle

% Contents

%%%%%%%%%%%%%%%%%%%%%%%%%%%%%%%%%%%%%%%%%%%%%%%%%%%%%%%%%%%%%%%%%%%%%%%%%%%%%%%%%%%%%%%%%%%%%%%%%%%%%%%%%%%%%%%%%%%
%%%%%%%%%%%%%%%%%%%%%%%%%%%%%%%%%%%%%%%%%%%%%%%%%%%%%%%%%%%%%%%%%%%%%%%%%%%%%%%%%%%%%%%%%%%%%%%%%%%%%%%%%%%%%%%%%%%
\section{Introduction}
%%%%%%%%%%%%%%%%%%%%%%%%%%%%%%%%%%%%%%%%%%%%%%%%%%%%%%%%%%%%%%%%%%%%%%%%%%%%%%%%%%%%%%%%%%%%%%%%%%%%%%%%%%%%%%%%%%%
%%%%%%%%%%%%%%%%%%%%%%%%%%%%%%%%%%%%%%%%%%%%%%%%%%%%%%%%%%%%%%%%%%%%%%%%%%%%%%%%%%%%%%%%%%%%%%%%%%%%%%%%%%%%%%%%%%%
In the last years, there has been a growing interest in simultaneous space-time boundary element methods (BEM) for the heat equation~\cite{cs13,mst14,mst15,ht18,cr19,dns19,dzomk19,tausch19,zwom21}.
In contrast to the differential operator based variational formulation on the space-time cylinder, the variational formulation corresponding to space-time BEM is coercive~\cite{an87,costabel90} so that the discretized version always has a unique solution  regardless of the chosen trial space which is even quasi-optimal in the natural energy norm.
Moreover, it is naturally applicable on unbounded domains and only requires a mesh of the lateral boundary of the space-time cylinder resulting in a dimension reduction.
The potential disadvantage that discretizations lead to dense matrices due to the nonlocality of the boundary integral operators has been tackled, e.g., in \cite{mst14,mst15,ht18} via the fast multipole method and $\mathcal{H}$-matrices.

Two often mentioned advantages of simultaneous space-time methods are their potential for massive parallelization as well as their potential for fully adaptive refinement to resolve singularities local in both space and time.
While the first advantage has been investigated in, e.g., \cite{dzomk19,zwom21}, the latter requires suitable {\slshape a posteriori} computable error estimators, which have not been developed yet for the heat equation.
Indeed, concerning {\slshape a posteriori} error estimation as well as adaptive refinement for BEM for time-dependent problems, we are only aware of the works \cite{glaefke12,goss20} for the wave equation in two and three space dimensions, respectively.

In the present manuscript, we generalize the results~\cite{faermann00,faermann02} from Faermann for stationary PDEs to the heat equation:
Let $\Omega\subset\R^{n}$, $n=2,3$, be a Lipschitz domain with boundary $\Gamma:=\partial\Omega$ and $T>0$ a given end time point with corresponding time interval $I:=(0,T)$.
We abbreviate the space-time cylinder $Q:=I\times\Omega$ with lateral boundary $\Sigma:=I\times\Gamma$ and corresponding outer normal vector $\n\in\R^{n}$.
With the heat kernel
\begin{align*}%\label{eq:heat kernel}
 G(t,\x) := \begin{cases} \frac{1}{(4\pi t)^{n/2}} \, e^{-\frac{|\x|^2}{4t}} \quad &\text{for }(t,\x)\in (0,\infty)\times\R^{n},
 \\
 0 \quad &\text{else},
 \end{cases}
\end{align*}
and a given function $f:\Sigma\to\R$, we consider the boundary integral equation
\begin{align}\label{eq:single layer operator}
 (\mathscr{V} \phi)(t,\x) := \int_\Sigma G(t-s,\x-\y) \phi(t-s,\x-\y) \d \y \d s = f(t,\x) \quad\text{for a.e.\ }(t,\x)\in \Sigma.
\end{align}
Here, $\mathscr{V}$ is the single-layer operator.
For given initial condition $u_0:\Omega\to\R$ and Dirichlet data $u_D:\Sigma\to\R$, such equations arise from the %interior
heat equation
\begin{align}\label{eq:interior}
\begin{array}{rcll}
\partial_t u - \Delta u & = & 0 & \text{ on } Q,\\
u & = & u_D & \text{ on }\Sigma,\\
u(0,\cdot) & = & u_0 & \text{ on }\Omega.
\end{array}
\end{align}
%and with the exterior domains $\Omega^{\rm ext}:=\R^{n}\setminus\overline\Omega$ and $Q^{\rm ext}:=I\times (\R^{n}\setminus\overline\Omega)$, from the exterior heat equation
%\begin{align}
%\begin{array}{rcll}
%\partial_t u^{\rm ext} - \Delta u^{\rm ext} & = & 0 & \text{ on } Q^{\rm ext},\\
%u^{\rm ext}(0,\cdot) & = & 0 & \text{ on }\Omega^{\rm ext},\\
%u^{\rm ext} & = & u_D & \text{ on }\Sigma,\\
%%u^{\rm ext} & = & \mathcal{O}(|\x|^{-1}) & \text{ as }|x| \to \infty,
%\end{array}
%\end{align}
%with appropriate radiation conditions for $|\x|\to\infty$.
%where for $n=2$, the final radiation condition is replaced by $u^{\rm ext} = c(t)\log|\x| + \mathcal{O}(1)$ as $|\x|\to\infty$ for some fixed time-dependent function $c$; see Section~\ref{sec:numerics} for more details.

Let $\PP$ be a mesh of the space-time boundary $\Sigma$ consisting of prismatic elements $J\times K$ with $J\subseteq \overline I$ and $K\subseteq \Gamma$, and let $\Phi$ be an associated approximation of $\phi$.
Typically, $\Phi$ is a piecewise polynomial with respect to~$\PP$.
As $\mathscr{V}$ is an isomorphism from the dual space $H^{-1/2,-1/4}(\Sigma):= H^{1/2,1/4}(\Sigma)'$  to the anisotropic Sobolev space $H^{1/2,1/4}(\Sigma)$, the discretization error $\norm{\phi-\Phi}{H^{-1/2,-1/4}(\Sigma)}$ is equivalent to the norm of the residual $\norm{f-\mathscr{V}\Phi}{H^{1/2,1/4}(\Sigma)}$.
We show that the residual norm can be localized up to weighted $L_2$-terms, i.e.,
\begin{align*}
 \sum_{J\times K\in\PP} \eta_\PP(\Phi,J\times K)^2
 \lesssim \norm{f-\mathscr{V}\Phi}{H^{1/2,1/4}(\Sigma)}^2
 \lesssim \sum_{J\times K\in\PP} \eta_\PP(\Phi,J\times K)^2 + \zeta_\PP(\Phi,J\times K)^2,
\end{align*}
where $\eta_\PP(\Phi,J\times K)^2$ measures the $H^{1/2,1/4}$-seminorm of the residual in a neighborhood of $J\times K$ and $\zeta_\PP(\Phi) := (\diam(K)^{-1} + |J|^{-1/2})\norm{f-\mathscr{V} \Phi}{L_2(J\times K)}^2$.
The hidden constants depend only on the regularity of the %space and time meshes that are obtained by restricting $\PP$  to sets of the form $\{t\}\times \Gamma$ and $\overline I\times\{\x\}$, respectively.
of the meshes found by fixing either the temporal or the spatial coordinate in $\PP$.
In particular, we do not require any assumption on the relation between the spatial and temporal size of the mesh elements, making anisotropically refined meshes possible.

If the elements satisfy the scaling $|J| \eqsim \diam(K)^{2}$ and if $\Phi$ is the Galerkin approximation of $\phi$ in a discrete space $\XX$ that contains at least all $\PP$-piecewise constant functions, then we can additionally prove that
\begin{align*}
 \zeta_\PP(\Phi,J\times K)\lesssim\eta_\PP(\Phi,J\times K).
\end{align*}
Indeed, numerical experiments (with $n=2$) suggest that this is not the case in general:
If the scaling condition is not enforced, we observe situations where the weighted $L_2$-terms $\zeta$ do not decay under mesh-refinement.

That being said, the estimator $\eta$ does not only behave efficiently but also reliably in all considered examples.
Moreover, anisotropic refinement steered by the space- and time-components of the estimator always yield the optimal algebraic convergence rate of both the estimator and the error.
The source code that we used to generate the numerical results is available at~\cite{gvv21}.

%%%%%%%%%%%%%%%%%%%%%%%%%%%%%%%%%%%%%%%%%%%%%%%%%%%%%%%%%%%%%%%%%%%%%%%%%%%%%%%%%%%%%%%%%%%%%%%%%%%%%%%%%%%%%%%%%%%
\subsection*{Outline}
%%%%%%%%%%%%%%%%%%%%%%%%%%%%%%%%%%%%%%%%%%%%%%%%%%%%%%%%%%%%%%%%%%%%%%%%%%%%%%%%%%%%%%%%%%%%%%%%%%%%%%%%%%%%%%%%%%%
The remainder of this work is organized as follows:
%Section~\ref{sec:preliminaries} introduces some general notation, recalls fractional as well as anisotropic Sobolev spaces,  gives the precise relation between the heat equation and the considered boundary integral equation, and specifies the mapping properties of the involved operators.
%Moreover, it introduces prismatic meshes on the space-time boundary as well as the Galerkin boundary element method for trial spaces associated to these meshes.
Section~\ref{sec:preliminaries} summarizes the general principles of the space-time boundary element method for the heat equation.
%\rvv{We zouden ook korter kunnen zijn, en iets zeggen als `In Section~\ref{sec:preliminaries} we summarize the general principles of the space-time boundary element method for the heat equation.`}\GG{Ik zou het denk ik eerder zo laten, want de meshes moeten we sowieso een beetje apart opmerken. Hetzelfde geldt voor general notation.}\rvv{Oke. Wordt het wel een vrij lange outline, die toch niemand echt gaat lezen ;-)}
Section~\ref{sec:posteriori} recalls the localization argument of \cite{faermann00,faermann02} and applies it to anisotropic Sobolev spaces (Theorem~\ref{thm:localization}).
This result is then invoked in Corollary~\ref{cor:main} for the residual, resulting in efficient and reliable {\slshape a posteriori} computable error bounds.
In particular, a Poincar\'e-type inequality (Lemma~\ref{lem:poincare}) allows to estimate the weighted $L_2$-terms that are still present in the upper bound from Theorem~\ref{thm:localization}.
Finally, Section~\ref{sec:numerics} introduces an adaptive algorithm for $n=2$ which is based on the derived error estimator.
Different marking and refinement strategies are presented.
The adaptive algorithm is subsequently applied to several concrete examples with typical singularities in space and time.
The stable implementation is discussed in Appendix~\ref{sec:computation}.

%%%%%%%%%%%%%%%%%%%%%%%%%%%%%%%%%%%%%%%%%%%%%%%%%%%%%%%%%%%%%%%%%%%%%%%%%%%%%%%%%%%%%%%%%%%%%%%%%%%%%%%%%%%%%%%%%%%
%%%%%%%%%%%%%%%%%%%%%%%%%%%%%%%%%%%%%%%%%%%%%%%%%%%%%%%%%%%%%%%%%%%%%%%%%%%%%%%%%%%%%%%%%%%%%%%%%%%%%%%%%%%%%%%%%%%
\section{Preliminaries}\label{sec:preliminaries}
%%%%%%%%%%%%%%%%%%%%%%%%%%%%%%%%%%%%%%%%%%%%%%%%%%%%%%%%%%%%%%%%%%%%%%%%%%%%%%%%%%%%%%%%%%%%%%%%%%%%%%%%%%%%%%%%%%%
%%%%%%%%%%%%%%%%%%%%%%%%%%%%%%%%%%%%%%%%%%%%%%%%%%%%%%%%%%%%%%%%%%%%%%%%%%%%%%%%%%%%%%%%%%%%%%%%%%%%%%%%%%%%%%%%%%%

%%%%%%%%%%%%%%%%%%%%%%%%%%%%%%%%%%%%%%%%%%%%%%%%%%%%%%%%%%%%%%%%%%%%%%%%%%%%%%%%%%%%%%%%%%%%%%%%%%%%%%%%%%%%%%%%%%%
\subsection{General notation}
%%%%%%%%%%%%%%%%%%%%%%%%%%%%%%%%%%%%%%%%%%%%%%%%%%%%%%%%%%%%%%%%%%%%%%%%%%%%%%%%%%%%%%%%%%%%%%%%%%%%%%%%%%%%%%%%%%%
Throughout and without any ambiguity, $|\cdot|$ denotes the absolute value of scalars, the Euclidean norm of vectors in $\R^m$, or the the measure of a set in $\R^m$, e.g., the length of an interval or the area of a surface in $\R^3$.
We write $A\lesssim B$ to abbreviate $A\le CB$ with some generic constant $C>0$, which is clear from the context.
Moreover, $A\eqsim B$ abbreviates $A\lesssim B\lesssim A$.

%%%%%%%%%%%%%%%%%%%%%%%%%%%%%%%%%%%%%%%%%%%%%%%%%%%%%%%%%%%%%%%%%%%%%%%%%%%%%%%%%%%%%%%%%%%%%%%%%%%%%%%%%%%%%%%%%%%
\subsection{Anisotropic Sobolev spaces}\label{sec:anisotropic spaces}
%%%%%%%%%%%%%%%%%%%%%%%%%%%%%%%%%%%%%%%%%%%%%%%%%%%%%%%%%%%%%%%%%%%%%%%%%%%%%%%%%%%%%%%%%%%%%%%%%%%%%%%%%%%%%%%%%%%
For $n$-dimensional $\omega\subseteq\Omega$ or $(n-1)$-dimensional $\omega\subseteq\Gamma$, and $\mu\in(0,1]$, we first recall the Sobolev space
\begin{align*}
 H^{\mu}(\omega):=\set{v\in L_2(\omega)}{\norm{v}{H^{\mu}(\omega)} < \infty}
\end{align*}
associated with the Sobolev--Slobodeckij norm
\begin{align*}
 \norm{v}{H^{\mu}(\omega)}^2
 := \norm{v}{L_2(\omega)}^2 + |v|_{H^{\mu}(\omega)}^2,
 \quad  |v|_{H^{\mu}(\omega)}^2
 :=\begin{cases}
 \int_\omega\int_\omega \frac{|v(\x)-v(\y)|^2}{|\x-\y|^{{\rm dim}(\omega)+2\mu}}\d\y\d\x &\text{ if }\mu\in(0,1),
 \\
 \norm{\nabla_\omega v}{L_2(\omega)}^2 \quad&\text{ if }\mu=1,
 \end{cases}
\end{align*}
where ${\rm dim}(\omega)$ denotes the dimension of $\omega$, i.e.,  $n$ or $n-1$, and $\nabla_\omega$ denotes the (weak) gradient on $\omega$, i.e., the standard gradient or the surface gradient.

Moreover, we define for any subinterval $J\subseteq\overline I$, $\nu\in(0,1]$, and any Banach space $X$,
\begin{align*}
 H^\nu(J;X) := \set{v\in L_2(J;X)}{\norm{v}{H^{\nu}(J;X)} < \infty}
\end{align*}
associated with the norm
\begin{align*}
 \norm{v}{H^{\nu}(J;X)}^2
 := \norm{v}{L_2(J;X)}^2 + |v|_{H^{\nu}(J;X)}^2,
 \quad  |v|_{H^{\nu}(J;X)}^2
 :=\begin{cases}
 \int_J\int_J \frac{\norm{v(t)-v(s)}{X}^2}{|t-s|^{1+2\nu}}\d s\d t &\text{ if }\nu\in(0,1),
 \\
 \norm{\partial_t v}{L_2(\omega)}^2 &\text{ if }\nu=1,
 \end{cases}
\end{align*}
where $\partial_t$ denotes the (weak) time derivative.
If $X=\R$, we simply write $H^\nu(J)$, $\norm{v}{H^\nu(J)}$, and $|v|_{H^\nu(J)}$.
Finally, we recall the anisotropic Sobolev space
\begin{align*}
 H^{\mu,\nu}(J\times\omega) := L_2(J;H^{\mu}(\omega)) \cap H^{\nu}(J;L_2(\omega))
\end{align*}
with corresponding norm
\begin{align*}
 \norm{v}{H^{\mu,\nu}(J\times\omega)}^2
 := \norm{v}{L_2(J;H^{\mu}(\omega))}^2 + \norm{v}{H^{\nu}(J;L_2(\omega))}^2
 \quad\text{for all } v\in H^{\mu,\nu}(J\times\omega).
\end{align*}
We will sometimes use the abbreviation
\begin{align*}
  |v|_{L_2(J;H^{\mu}(\omega))}^2 := \int_J |v(t,\cdot)|_{H^\mu(\omega)}^2 \d t
   \quad\text{for all } v\in L_2(J;H^\mu(\omega)).
\end{align*}

For $\omega\in\{\Omega,\Gamma\}$, we denote by $H^{-\mu,-\nu}(I\times\omega)$ the dual space of $H^{\mu,\nu}(I\times\omega)$ with duality pairing $\dual{\cdot}{\cdot}_{I\times\omega}$.
We interpret $L_2(I\times\omega)$ as subspace of $H^{-\mu,-\nu}(I\times\omega)$ via
\begin{align*}
 \dual{v}{\psi}_{I\times\omega} := \int_I\int_\omega v(t,\x) \psi(t,\x) \d\x \d t
 \quad\text{for all }v\in H^{\mu,\nu}(I\times\omega) \text{ and }\psi\in L_2(I\times\omega).
\end{align*}

%%%%%%%%%%%%%%%%%%%%%%%%%%%%%%%%%%%%%%%%%%%%%%%%%%%%%%%%%%%%%%%%%%%%%%%%%%%%%%%%%%%%%%%%%%%%%%%%%%%%%%%%%%%%%%%%%%%
\subsection{Boundary integral equations}\label{sec:integral equations}
%%%%%%%%%%%%%%%%%%%%%%%%%%%%%%%%%%%%%%%%%%%%%%%%%%%%%%%%%%%%%%%%%%%%%%%%%%%%%%%%%%%%%%%%%%%%%%%%%%%%%%%%%%%%%%%%%%%
It is well-known that for $u_0\in L^2(\Omega)$ and $u_D\in H^{1/2,1/4}(\Sigma)$, the heat equation~\eqref{eq:interior} admits a unique solution
$u\in %L^2(I,H^1(\Omega))\cap H^1(I,H^{-1}(\Omega) \subset
H^{1,1/2}(Q)$.
With the normal derivative $\phi_N:=\partial_{\n} u \in H^{-1/2,-1/4}(\Sigma)$, $u$ satisfies the representation formula
\begin{align}\label{eq:representation}
 u = \widetilde {\mathscr M}_0 u_0 + \widetilde {\mathscr V}\phi_N  - \widetilde {\mathscr K}u_D,
\end{align}
where
\begin{align}\label{eq:initial potential}
 (\widetilde {\mathscr M}_0 u_0)(t,\x)&:=\int_\Omega G(t,\x-\y) u_0(\y) \d\y \quad\text{for all }(t,\x)\in Q
\intertext{denotes the initial potential,}
 \label{eq:single potential}
 (\widetilde {\mathscr V} \phi_N)(t,\x)&:=\int_\Sigma G(t-s,\x-\y) \phi_N(\y) \d\y \d s \quad\text{for all }(t,\x)\in Q
\intertext{denotes the single-layer potential, and }
 \label{eq:double potential}
 (\widetilde {\mathscr K} u_D)(t,\x)&:=\int_\Sigma \partial_{\n(\y)}G(t-s,\x-\y) u_D(\y) \d\y \d s \quad\text{for all }(t,\x)\in Q
\end{align}
denotes the double-layer potential.
These linear operators satisfy the mapping properties $\widetilde {\mathscr M}_0:L^2(\Omega)\to H^{1,1/2}(Q)$, $\widetilde {\mathscr V}_0:H^{-1/2,-1/4}(\Sigma)\to H^{1,1/2}(Q)$, and $\widetilde {\mathscr K}_0:H^{1/2,1/4}(\Sigma)\to H^{1,1/2}(Q)$.
The lateral trace $(\cdot)|_\Sigma$ of these potentials is given by
\begin{align*}
 (\widetilde {\mathscr M}_0 u_0)|_\Sigma = {\mathscr M}_0 u_0, \quad
 (\widetilde {\mathscr V} \phi_N)|_\Sigma = {\mathscr V} \phi_N, \quad
 (\widetilde {\mathscr K} u_D)|_\Sigma = ({\mathscr K}-1/2) u_D,
\end{align*}
where the initial operator ${\mathscr M}_0$, the single-layer operator ${\mathscr V}$, and the double-layer operator ${\mathscr K}$ are defined as in \eqref{eq:initial potential}--\eqref{eq:double potential} for $(t,\x)\in\Sigma$.
Applying the lateral trace to \eqref{eq:representation} thus results in
\begin{align}\label{eq:direct}
 {\mathscr V}\phi_N =  ({\mathscr K}+1/2) u_D - {\mathscr M}_0 u_0 ,
\end{align}
i.e., \eqref{eq:single layer operator} with $f:= ({\mathscr K}+1/2) u_D - {\mathscr M}_0 u_0$.
As the single-layer operator $\mathscr{V}$ is also coercive, i.e.,
\begin{align}\label{eq:coercivity}
 \dual{\mathscr{V}\psi}{\psi}_\Sigma \ge c_{\rm coe} \norm{\psi}{H^{-1/2,-1/4}(\Sigma)}^2 \quad \text{for all }\psi\in H^{-1/2,-1/4}(\Sigma)
\end{align}
with some constant $c_{\rm coe}>0$, \eqref{eq:direct} is uniquely solvable and the solution $\phi_N$ is just the missing normal derivative $\partial_\n u$ to compute $u$ via the representation formula~\eqref{eq:representation}.

Alternatively, one can make the ansatz $u=\widetilde{\mathscr M}_0 u_0 + \widetilde{\mathscr V}\phi$.
Indeed, both $\widetilde{\mathscr M}_0 u_0$ and $\widetilde{\mathscr V}\phi$ satisfy the heat equation, where $\widetilde{\mathscr M}_0u_0$ restricted to $\{0\}\times\Omega$ coincides with $u_0$ and  $\widetilde{\mathscr V}\phi$ vanishes there.
To satisfy the Dirichlet boundary conditions, one has to solve
\begin{align}\label{eq:indirect}
 \mathscr{V}\phi =  u_D - \mathscr{M}_0 u_0,
\end{align}
i.e., \eqref{eq:single layer operator} with $f:=u_D - \mathscr{M}_0 u_0$.
While \eqref{eq:direct} is called direct method as it directly provides the physically relevant quantity $\phi_N=\partial_\n u$, \eqref{eq:indirect} is called indirect method.

For more details and proofs, we refer to the seminal works \cite{an87,noon88,costabel90}, which considered $u_0=0$, and to \cite{dns19,dohr19} for the general case.

%%%%%%%%%%%%%%%%%%%%%%%%%%%%%%%%%%%%%%%%%%%%%%%%%%%%%%%%%%%%%%%%%%%%%%%%%%%%%%%%%%%%%%%%%%%%%%%%%%%%%%%%%%%%%%%%%%%
\subsection{Boundary meshes}
%%%%%%%%%%%%%%%%%%%%%%%%%%%%%%%%%%%%%%%%%%%%%%%%%%%%%%%%%%%%%%%%%%%%%%%%%%%%%%%%%%%%%%%%%%%%%%%%%%%%%%%%%%%%%%%%%%%
Throughout this work, we consider prismatic meshes $\PP$ of $\Sigma$:
\begin{itemize}
\item $\PP$ is a finite set of prisms of the form $P=J\times K$, where $J\subseteq \overline I =[0,T]$ is some non-empty compact interval and $K\subseteq\Gamma$ is the image of some compact Lipschitz domain\footnote{A compact Lipschitz domain is the closure of a bounded Lipschitz domain. For $n = 2$, it is a compact interval with non-empty interior.}
$\hat K\subset\R^{n-1}$ under some bi-Lipschitz mapping;
\item for all $P,\tilde{P}\in\PP$ with $P\neq \tilde{P}$, the intersection has measure zero on $\Sigma$;
\item $\PP$ is a partition of $\Sigma$, i.e., $\Sigma = \bigcup_{P\in\PP} P$.
\end{itemize}

For arbitrary $t\in\overline I$ and $\x\in\Gamma$,
we abbreviate the induced sets
\begin{align*}
 \PP|_t:=\set{K\subseteq\Gamma}{(\{t\}\times\Gamma) \cap (J\times K)\neq\emptyset \text{ for some }J\times K\in\PP}
\end{align*}
and
\begin{align*}
  \PP|_\x:=\set{J\subseteq\overline I}{(\overline I\times\{\x\}) \cap (J\times K)\neq\emptyset \text{ for some }J\times K\in\PP};
\end{align*}
see Figure~\ref{fig:mesh2d} for a visualization.
For almost all $t\in\overline I$,  $\PP|_t$ is a mesh of $\Gamma$, i.e., a partition of $\Gamma$ into finitely many compact Lipschitz domains such that the intersection of two distinct elements has measure zero on $\Gamma$.
Similarly, for almost all $\x\in\Gamma$, $\PP|_\x$ is a mesh of $\overline I$, i.e., a partition of $\overline I$ into finitely many non-empty compact intervals such that the intersection of two different intervals is at most a point.

%%%%%%%
\begin{figure}
\begin{tikzpicture}[scale=6.5]
\coordinate (O) at (0,0);
\draw[color = white, fill=blue!20]  (0.5,0) -- (0.75,0) -- (0.75,0.5) -- (0.5,0.5) -- cycle;
\draw[color = white, fill=blue!40]  (0.5,0.125) -- (0.75,0.125) -- (0.75,0.25) -- (0.5,0.25) -- cycle;
\draw[color = white, fill=white]  (0.625,0) -- (0.75,0) -- (0.75,0.0625) -- (0.625,0.0625) -- cycle;
\node at (0.625,0.1875) {$J_2\times K_2$};

\draw[color = white, fill=red!20]  (0,0.5) -- (1,0.5) -- (1,0.75) -- (0,0.75) -- cycle;
\draw[color = white, fill=red!40]  (0.25,0.5) -- (0.5,0.5) -- (0.5,0.75) -- (0.25,0.75) -- cycle;
\draw[color = white, fill=white]  (0,0.5) -- (0.125,0.5) -- (0.125,0.625) -- (0,0.625) -- cycle;
\node at (0.375,0.625) {$J_1\times K_1$};

\draw[line width=0.5mm, color=black] (O) -- (1,0);
\draw[line width=0.5mm, color=black] (1,0) -- (1,1);
\draw[line width=0.5mm, color=black] (1,1) -- (0,1);
\draw[line width=0.5mm, color=black] (0,1) -- (O);
\draw[line width=0.5mm, color=black] (0.5,0) -- (0.5,1);
\draw[line width=0.5mm, color=black] (0,0.5) -- (1,0.5);

\draw[line width=0.5mm, color=black] (0.25,0) -- (0.25,0.5);
\draw[line width=0.5mm, color=black] (0,0.25) -- (0.5,0.25);
\draw[line width=0.5mm, color=black] (0.125,0) -- (0.125,0.5);
\draw[line width=0.5mm, color=black] (0.0625,0.25) -- (0.0625,0.5);

\draw[line width=0.5mm, color=black] (0,0.75) -- (0.5,0.75);
\draw[line width=0.5mm, color=black] (0.25,0.5) -- (0.25,0.75);
\draw[line width=0.5mm, color=black] (0,0.625) -- (0.25,0.625);
\draw[line width=0.5mm, color=black] (0.125,0.5) -- (0.125,0.625);
\draw[line width=0.5mm, color=black] (0.75,0.0625) -- (1,0.0625);

\draw[line width=0.5mm, color=black] (0.5,0.25) -- (1,0.25);
\draw[line width=0.5mm, color=black] (0.5,0.125) -- (1,0.125);
\draw[line width=0.5mm, color=black] (0.75,0) -- (0.75,0.25);
\draw[line width=0.5mm, color=black] (0.625,0) -- (0.625,0.125);
\draw[line width=0.5mm, color=black] (0.625,0) -- (0.625,0.125);
\draw[line width=0.5mm, color=black] (0.625,0.0625) -- (0.75,0.0625);

\draw[line width=0.25mm, color=blue,dashed] (0.875,0) -- (0.875,1);
\draw[line width=0.25mm,color=blue] plot[only marks,mark=x,mark size=0.5] coordinates{(0.875,0) (0.875,0.0625) (0.875,0.125) (0.875,0.25) (0.875,0.5) (0.875,1)};
\node[below] at (0.875,0) {$\x=\tfrac{7}{8}$};
\draw[line width=0.25mm, color=red,dashed] (0,0.375) -- (1,0.375);
\draw[line width=0.25mm,color=red] plot[only marks,mark=x,mark size=0.5] coordinates{(0,0.375) (0.0625,0.375) (0.125,0.375)  (0.25,0.375) (0.5,0.375) (1,0.375)};
\node[left] at (0,0.375) {$t=\tfrac{3}{8}$};
\end{tikzpicture}

\caption{Prismatic mesh $\PP$ for $\Gamma=[0,1]$ and $T=1$.
The dashed blue and red lines indicate the meshes $\PP|_t$ for $t=\tfrac38$ and $\PP|_\x$ for $\x=\tfrac{7}{8}$, respectively, where the corresponding elements are limited by crosses.
For the elements $J_1\times K_1=[\tfrac12,\tfrac34]\times [\tfrac18,\tfrac14]$ and  $J_2\times K_2=[\tfrac14,\tfrac12]\times[\tfrac12,\tfrac34]$, the  integration domains
$\bigcup_{{\tilde{J}\times\tilde{K}\in\PP \atop |J_1\cap \tilde{J}|>0} \atop K_1\cap\tilde{K} \neq \emptyset} (J_1\cap \tilde J) \times (K_1\cup \tilde K)$
and
$\bigcup_{{\tilde{J}\times\tilde{K}\in\PP \atop J_2\cap \tilde{J} \neq \emptyset} \atop |K_2\cap\tilde{K}|>0} (J_2\cup \tilde J) \times (K_2\cap \tilde K)$
from \eqref{eq:efficiency} are highlighted in (light) red and (light) blue, respectively.}
\label{fig:mesh2d}
\end{figure}
%%%%%%%

Note that for one fixed prismatic mesh $\PP$ there exist constants $\const{nei}\ge1$, $\const{dist}\ge1$, $\const{shape}\ge1$, and $\const{lqu}\ge1$ such that:
\begin{itemize}
\item for almost all $t\in\overline I$,
the number of neighbors of an element in $\PP|_t$ is bounded, i.e.,
\begin{align}\label{eq:neighbors}
 \#\set{\tilde{K}\in\PP|_t}{K\cap\tilde{K}\neq\emptyset} \le \const{nei} \quad\text{for all }K\in\PP|_t.
\end{align}
\item for almost all $t\in\overline I$,
    the elements of $\PP|_t$ are uniformly away from non-neighboring elements, i.e.,
\begin{align}\label{eq:dist}
 \diam(K)\le \const{dist} \dist(K,\tilde{K}) \quad \text{for all } K,\tilde{K}\in\PP|_t\text{ with }K\cap\tilde{K}=\emptyset;
\end{align}
\item for almost all $t\in\overline I$,
the elements of $\PP|_t$ are shape-regular, i.e.,
\begin{align}\label{eq:shape}
 \const{shape}^{-1} |K|^{n-1} \le  \diam(K)^{n-1}\le \const{shape} |K| \quad \text{for all } K \in\PP|_t;
\end{align}
\item for almost all $\x\in\Gamma$, $\PP|_\x$ is locally quasi-uniform, i.e.,
\begin{align}\label{eq:local quasi-uniformity}
 |J| \le \const{lqu}|\tilde{J}|\quad\text{for all }J,\tilde{J}\in\PP|_\x\text{ with }J\cap \tilde{J}\neq\emptyset.
\end{align}
%
%\item the number of lateral neighbors of an element in $\PP$ is bounded, i.e.,
%\begin{align}\label{eq:lateral neighbors}
% \#\set{\tilde{J}\times\tilde{K}\in\PP}{|J\cap \tilde{J}|>0\wedge K\cap\tilde{K}\neq\emptyset} \le \const{lat} \quad\text{for all } J\times K\in\PP.
%\end{align}
\end{itemize}
%Note that \eqref{eq:lateral neighbors} already implies \eqref{eq:neighbors} with $\const{nei}=\const{lat}$.

In the remainder of this work, we will always indicate the dependence of estimates on these particular constants.

\begin{remark}
If, for $n=2$, the meshes $\PP|_t$ are found by iteratively bisecting some initial mesh and the level difference of neighboring elements is bounded by $1$, then the constants from \eqref{eq:neighbors}--\eqref{eq:shape} depend only on the initial mesh;   cf.~\cite{affkp13}.
For $n=3$, the same holds true if the initial mesh is for instance a conforming (curvilinear) triangulation of $\Gamma$ and one iteratively applies newest vertex bisection.
The arguments for \eqref{eq:neighbors}--\eqref{eq:dist} are found in \cite[Section~2.3 and 4.1]{affkmp17}.
\end{remark}

%%%%%%%%%%%%%%%%%%%%%%%%%%%%%%%%%%%%%%%%%%%%%%%%%%%%%%%%%%%%%%%%%%%%%%%%%%%%%%%%%%%%%%%%%%%%%%%%%%%%%%%%%%%%%%%%%%%
\subsection{Boundary element method}
%%%%%%%%%%%%%%%%%%%%%%%%%%%%%%%%%%%%%%%%%%%%%%%%%%%%%%%%%%%%%%%%%%%%%%%%%%%%%%%%%%%%%%%%%%%%%%%%%%%%%%%%%%%%%%%%%%%
Given a prismatic boundary mesh $\PP$ and an associated finite-dimensional trial space $\XX\subset H^{-1/2,-1/4}(\Sigma)$, e.g., the space of all $\PP$-piecewise polynomials of some fixed degree in space and time, let $\Phi\in\XX$ denote the Galerkin discretization of the solution $\phi$ of the boundary integral equation~\eqref{eq:single layer operator}, i.e.,
\begin{align}\label{eq:Galerkin}
 \dual{\mathscr{V}\Phi}{\Psi}_\Sigma = \dual{f}{\Psi}_\Sigma
 \quad \text{for all }\Psi\in \XX,
\end{align}
which is equivalent to the Galerkin orthogonality
\begin{align}\label{eq:orthogonality}
 \dual{\mathscr{V}(\phi-\Phi)}{\Psi}_\Sigma = 0
 \quad \text{for all }\Psi\in \XX.
\end{align}

Note that coercivity~\eqref{eq:coercivity} guarantees unique solvability of the latter equations, and the C\'ea lemma applies
\begin{align}\label{eq:cea}
 \norm{\phi-\Phi}{H^{-1/2,-1/4}(\Sigma)}
 \le
 \frac{\const{cont}}{c_{\rm coe}}\,\min_{\Psi\in\XX} \norm{\phi-\Psi}{H^{-1/2,-1/4}(\Sigma)},
\end{align}
where $\const{cont}$ is the operator norm of $\mathscr{V}:H^{-1/2,-1/4}(\Sigma) \to H^{1/2,1/4}(\Sigma)$.
Suppose $\PP = \set{J\times K}{J\in \PP_{\overline I}, K\in\PP_\Gamma}$ is a full tensor-mesh corresponding to a mesh $\PP_\Gamma$ of $\Gamma$ with uniform mesh-size $h_\x\eqsim \diam(K)$ for all $K\in\PP_\Gamma$ and a mesh $\PP_{\overline I}$ of $\overline I$ with uniform step-size $h_t\eqsim h_\x^\sigma$ for some $\sigma>0$.
Using $\PP$-piecewise polynomials of some degree $p_\x\in\N_0$ in space- and some degree $p_t\in\N_0$ in time-direction as trial space $\XX$, then gives the error decay rate
\begin{align}\label{eq:rates}
 \min_{\Psi\in\XX} \norm{\phi-\Psi}{H^{-1/2,-1/4}(\Sigma)} \lesssim N^{-\frac{\min\{p_\x+3/2,(p_t+5/4)\sigma\}}{n-1+\sigma}}
 \quad\text{for all smooth }\phi;
\end{align}
see \cite[Theorem~3.3]{cr19}.
Here, $N\eqsim h_\x^{-(n-1)} h_t^{-1} = h_\x^{n-1+\sigma}$ denotes the number of degrees of freedom.
The optimal grading parameter is thus given by $\sigma = (p_\x+\tfrac32)/(p_t+\tfrac54)$ with resulting rate $\OO\big(N^{-\frac{p_\x+3/2}{n-1+\sigma}}\big)$.
%The particular choice $p_\x=p_t=0$ yields the rate $\OO\big(N^{-\frac{3/2}{n+1/5}}\big)$.
%\begin{align}\label{eq:rates}
% \min_{\Psi\in\XX} \norm{\phi-\Psi}{H^{-1/2,-1/4}(\Sigma)} \lesssim (h_\x^{1/2}+h_t^{1/4})(h_\x^{p_\x+1} + h_t^{p_t+1}) \norm{\phi}{H^{p_\x+1,p_t+1}(\Sigma)};
%\end{align}
%see, e.g., \cite[Theorem~7.5]{noon88} and \cite[Proposition~5.3]{costabel90}.
%Here, $H^{p_\x+1,p_t+1}(\Sigma)$ is defined as in Section~\ref{sec:anisotropic spaces} with weak derivatives in $L_2(\Sigma)$ up to order $p_\x+1$ in space- and $p_t+1$ in time-direction.

%%%%%%%%%%%%%%%%%%%%%%%%%%%%%%%%%%%%%%%%%%%%%%%%%%%%%%%%%%%%%%%%%%%%%%%%%%%%%%%%%%%%%%%%%%%%%%%%%%%%%%%%%%%%%%%%%%%
%%%%%%%%%%%%%%%%%%%%%%%%%%%%%%%%%%%%%%%%%%%%%%%%%%%%%%%%%%%%%%%%%%%%%%%%%%%%%%%%%%%%%%%%%%%%%%%%%%%%%%%%%%%%%%%%%%%
\section{A posteriori error estimation}\label{sec:posteriori}
%%%%%%%%%%%%%%%%%%%%%%%%%%%%%%%%%%%%%%%%%%%%%%%%%%%%%%%%%%%%%%%%%%%%%%%%%%%%%%%%%%%%%%%%%%%%%%%%%%%%%%%%%%%%%%%%%%%
%%%%%%%%%%%%%%%%%%%%%%%%%%%%%%%%%%%%%%%%%%%%%%%%%%%%%%%%%%%%%%%%%%%%%%%%%%%%%%%%%%%%%%%%%%%%%%%%%%%%%%%%%%%%%%%%%%%

As $\mathscr V$ is an isomorphism, it holds that
\begin{align}\label{eq:residual norm}
 \norm{\phi-\Phi}{H^{-1/2,-1/4}(\Sigma)} \eqsim  \norm{f-\mathscr V\Phi}{H^{1/2,1/4}(\Sigma)}.  %\quad \text{for all }\Psi\in H^{-1/2,-1/4}(\Sigma).
\end{align}
Here, $\Phi\in H^{-1/2,-1/4}(\Sigma)$ can be an arbitrary approximation of the solution $\phi$ of~\eqref{eq:single layer operator}.
While the right-hand side is in principle {\slshape a posteriori} computable, the computation of the Sobolev--Slobodeckij norm over the full space-time boundary $\Sigma$ is expensive, and it does not provide any information on where to locally refine the given mesh to increase the accuracy of the approximation.
According to~\eqref{eq:residual norm}, it is sufficient to derive suitable estimate for the residual $f-\mathscr V\Phi$ in the $H^{1/2,1/4}(\Sigma)$-norm.
Recall that this term is $L_2(\Sigma)$-orthogonal to all functions $\Psi\in\XX$ provided that $\Phi$ is the Galerkin approximation of $\phi$ in $\XX$; see~\eqref{eq:orthogonality}.

%%%%%%%%%%%%%%%%%%%%%%%%%%%%%%%%%%%%%%%%%%%%%%%%%%%%%%%%%%%%%%%%%%%%%%%%%%%%%%%%%%%%%%%%%%%%%%%%%%%%%%%%%%%%%%%%%%%
\subsection{Localization of the anisotropic Sobolev--Slobodeckij norm}
%%%%%%%%%%%%%%%%%%%%%%%%%%%%%%%%%%%%%%%%%%%%%%%%%%%%%%%%%%%%%%%%%%%%%%%%%%%%%%%%%%%%%%%%%%%%%%%%%%%%%%%%%%%%%%%%%%%
The following proposition provides the key argument for our {\slshape a posteriori} error estimation.
While the first inequality is trivial, the original version of the second one already goes back to \cite{faermann00,faermann02}.
We make use of the slightly generalized version from \cite[Lemma~4.5]{gp20}; see \cite[Lemma~5.3.2]{gantner17} for a detailed proof.

\begin{proposition}\label{prop:space-localization}
Let $\mu\in(0,1)$ and $\PP_\Gamma$ be a mesh of $\Gamma$.
Then, there exist constants $C_1,C_2>0$ such that for all $v\in H^\mu(\Gamma)$, there holds that
\begin{align}
\begin{split}
  C_1^{-1}\sum_{K\in\PP_\Gamma} \sum_{\substack{\tilde{K}\in\PP_\Gamma\\K\cap\tilde{K}\neq\emptyset}} |v|_{H^\mu(K\cup\tilde{K})}^2
  \le \norm{v}{H^\mu(\Gamma)}^2
 &\le \sum_{K\in\PP_\Gamma} \sum_{\substack{\tilde{K}\in\PP_\Gamma\\K\cap\tilde{K}\neq\emptyset}} |v|_{H^\mu(K\cup\tilde{K})}^2
 \\
 &\quad+ C_2\sum_{K\in\PP_\Gamma} \diam(K)^{-2\mu} \norm{v}{L_2(K)}^2.
\end{split}
\end{align}
The constant $C_1$ is given as $C_1=2(\const{nei}+1)^2$ with $\const{nei}$ from~\eqref{eq:neighbors} (with $\PP|_t$ replaced by $\PP_\Gamma$),
and $C_2$ depends only on the dimension $n$, $\mu$, $\Gamma$, and the constant $\const{dist}$ from~\eqref{eq:dist} (with $\PP|_t$ replaced by $\PP_\Gamma$).
\hfill$\square$
\end{proposition}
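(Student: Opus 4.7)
The plan is to prove the two inequalities separately: the lower bound follows from an elementary multiplicity argument, while the upper bound uses Faermann's strategy of splitting the nonlocal seminorm into a near-field part captured by the localized sum and a far-field remainder absorbed into the weighted $L_2$ term.

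For the lower bound, consider a pair $(\x,\y)\in\Gamma\times\Gamma$ with $\x$ in a unique element $K_0$ and $\y$ in a unique element $\tilde K_0$ (which holds almost everywhere). This pair contributes to $|v|_{H^\mu(K\cup\tilde K)}^2$ only when $\{K_0,\tilde K_0\}\subseteq\{K,\tilde K\}$, which by~\eqref{eq:neighbors} allows at most $(\const{nei}+1)$ choices for each of the at most two unspecified slots; accounting for ordering yields a uniform multiplicity bound by $2(\const{nei}+1)^2$. Since the integrand $|v(\x)-v(\y)|^2|\x-\y|^{-(n-1+2\mu)}$ is nonnegative, summing the localized integrals and comparing with the full integral over $\Gamma\times\Gamma$ gives the claim with $C_1=2(\const{nei}+1)^2$.

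For the upper bound, I would write
\[
 |v|_{H^\mu(\Gamma)}^2 = \sum_{K\in\PP_\Gamma}\int_K\int_\Gamma \frac{|v(\x)-v(\y)|^2}{|\x-\y|^{n-1+2\mu}}\d\y\d\x,
\]
and for each $K$ split the inner $\Gamma$-integral into the near-field $\y\in\bigcup_{\tilde K\cap K\neq\emptyset}\tilde K$ and its far-field complement. The near-field contribution is directly bounded by $\sum_{\tilde K\cap K\neq\emptyset}|v|_{H^\mu(K\cup\tilde K)}^2$, and summing over $K$ reproduces the localized term of the right-hand side. The hard part is controlling the far-field by the weighted $L_2$ sum. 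Using $|v(\x)-v(\y)|^2\le 2|v(\x)|^2+2|v(\y)|^2$ splits this contribution into a $v(\x)$-piece and a $v(\y)$-piece. For the $v(\x)$-piece, the distance condition~\eqref{eq:dist} forces $|\x-\y|\gtrsim\diam(K)$ for $\x\in K$ and $\y$ in the far-field of~$K$, so $\int_{|\x-\y|\gtrsim\diam(K)}|\x-\y|^{-(n-1+2\mu)}\d\y\lesssim\diam(K)^{-2\mu}$ by the standard radial estimate, and summation over $K$ yields $\sum_K\diam(K)^{-2\mu}\norm{v}{L_2(K)}^2$ up to a constant depending on $n,\mu,\Gamma$.

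The $v(\y)$-piece is the most delicate step and requires a Fubini swap combined with a second application of the distance condition. For fixed $\y$ in an element $\tilde K_0$, the elements $K$ whose far-field contains $\y$ are exactly those disjoint from $\tilde K_0$; by~\eqref{eq:dist} applied to $\tilde K_0$, all such $K$ lie in $\{\x\in\Gamma:|\x-\y|\gtrsim\diam(\tilde K_0)\}$, and hence $\sum_{K}\int_K|\x-\y|^{-(n-1+2\mu)}\d\x\lesssim\diam(\tilde K_0)^{-2\mu}$ by the same radial estimate. Summing over $\tilde K_0$ reproduces the weighted $L_2$ sum. Finally, to pass from $|v|_{H^\mu(\Gamma)}^2$ to $\norm{v}{H^\mu(\Gamma)}^2$, I would absorb $\norm{v}{L_2(\Gamma)}^2=\sum_K\norm{v}{L_2(K)}^2$ into the weighted $L_2$ sum using $\diam(K)^{-2\mu}\ge\diam(\Gamma)^{-2\mu}$, which contributes the $\Gamma$-dependence to~$C_2$. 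The main obstacle throughout is keeping track of which element's diameter to pair with which $L_2$-mass in the Fubini step, so that the dist condition delivers the correct $\diam(\tilde K_0)^{-2\mu}$ factor multiplying $|v(\y)|^2$ rather than the (possibly much smaller) factor $\diam(K)^{-2\mu}$.
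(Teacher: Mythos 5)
Your argument is correct and is precisely the Faermann localization argument that the paper invokes by citation (the multiplicity count for the lower bound, and the near-field/far-field splitting with the distance condition~\eqref{eq:dist} plus the radial estimate for the upper bound), yielding the stated constants $C_1=2(\const{nei}+1)^2$ and a $C_2$ depending only on $n$, $\mu$, $\Gamma$, and $\const{dist}$. No discrepancies with the cited proof.
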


Note that local quasi-uniformity~\eqref{eq:local quasi-uniformity} (with $\PP|_\x$ replaced by $\PP_{\overline I}$) of a time mesh $\PP_{\overline I}$ is actually equivalent to
\begin{align}
  \diam(J) = |J| \le \const{lqu} \dist(J,\tilde{J}) \quad \text{for all } J,\tilde{J}\in\PP_{\overline I} \text{ with } J \cap \tilde{J}=\emptyset.
\end{align}
Moreover, for any element $J\in\PP_{\overline I}$, there are at most three $\tilde{J}\in\PP_{\overline I}$ with $J\cap \tilde{J}\neq\emptyset$.
In particular, the same reference as before applies and we also obtain the following proposition.

\begin{proposition}\label{prop:time-localization}
Let $\nu\in(0,1)$ and $\PP_{\overline I}$ be a mesh of $\overline I$.
Then, there exist constants $C_1,C_2>0$ such that for all $v\in H^\nu(I)$, there holds that
\begin{align}
  C_1^{-1}\sum_{J\in\PP_{\overline I}} \sum_{\substack{\tilde{J}\in\PP_{\overline I}\\ J\cap J\neq\emptyset}} |v|_{H^\nu(J\cup \tilde{J})}^2
  \le \norm{v}{H^\nu(I)}^2
 \le \sum_{J\in\PP_{\overline I}} \sum_{\substack{\tilde{J}\in\PP_{\overline I}\\ J\cap J\neq\emptyset}} |v|_{H^\nu(J\cup \tilde{J})}^2
 + C_2\sum_{J\in\PP_{\overline I}} |J|^{-2\nu} \norm{v}{L_2(J)}^2.
\end{align}
The constant $C_1$ is given as $C_1=32$, and $C_2$ depends only on $\nu$, $|I|$, and the constant $\const{lqu}$ from \eqref{eq:local quasi-uniformity} (with $\PP|_\x$ replaced by $\PP_{\overline I}$).
\hfill$\square$
\end{proposition}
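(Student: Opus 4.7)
The plan is to recognize that Proposition~\ref{prop:time-localization} is the one-dimensional analogue of Proposition~\ref{prop:space-localization}, applied to the mesh $\PP_{\overline I}$ of the interval $\overline I$ viewed as a one-dimensional manifold. Accordingly, I would verify that $\PP_{\overline I}$ satisfies the hypotheses of the generalized Faermann-type localization \cite[Lemma~4.5]{gp20} (see also \cite[Lemma~5.3.2]{gantner17}) and then read off the constants, with the substitutions $\Gamma \rightsquigarrow \overline I$ and $\mathrm{dim}(\omega) \rightsquigarrow 1$.

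First I would check the three structural properties. The neighbor bound is immediate: for any $J\in\PP_{\overline I}$, only $J$ itself and its (at most two) immediate neighbors intersect $J$, so $\const{nei}\le 3$. Shape-regularity is trivial since $\diam(J)=|J|$ for intervals. The distance property is precisely the equivalence noted just before the statement: for $J,\tilde J\in\PP_{\overline I}$ with $J\cap\tilde J=\emptyset$, local quasi-uniformity \eqref{eq:local quasi-uniformity} implies $\diam(J)\le \const{lqu}\,\dist(J,\tilde J)$. With $\const{nei}=3$, the reference yields $C_1=2(\const{nei}+1)^2=32$, matching the stated value.

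For the lower bound I would argue directly: the Sobolev--Slobodeckij seminorm is subadditive over subdomains, so each seminorm $|v|_{H^\nu(J\cup\tilde J)}^2$ bounds a piece of the double integral over $I\times I$, and a given pair $(t,s)$ with $|v(t)-v(s)|^2/|t-s|^{1+2\nu}$ is counted at most $(\const{nei}+1)^2\le 16$ times across the sum, yielding the factor $32$ after symmetrization. For the upper bound I would split
\begin{align*}
|v|_{H^\nu(I)}^2 = \sum_{J,\tilde J\in\PP_{\overline I}} \int_J\int_{\tilde J} \frac{|v(t)-v(s)|^2}{|t-s|^{1+2\nu}}\d s\d t
\end{align*}
into contributions from touching pairs (captured directly by the seminorms $|v|_{H^\nu(J\cup\tilde J)}^2$) and non-touching pairs. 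For the latter, I would use $\dist(J,\tilde J)\gtrsim |J|/\const{lqu}$, bound $|v(t)-v(s)|^2\le 2|v(t)|^2+2|v(s)|^2$, and integrate. The integration of $|t-s|^{-(1+2\nu)}$ over all non-touching $\tilde J$ produces the factor $|J|^{-2\nu}$ (the tail of a geometric series converges because $\nu>0$), and the length $|I|$ enters as an upper bound on the distance range; this is where the dependence of $C_2$ on $\nu$, $|I|$, and $\const{lqu}$ arises.

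The main technical obstacle, if one were to redo the proof from scratch rather than invoke \cite{gp20,gantner17}, is the upper-bound tail estimate: uniformly summing $\int_{\tilde J} |t-s|^{-(1+2\nu)}\d s$ over all non-touching $\tilde J$ and reassembling into a bound of the form $C_2\sum_J |J|^{-2\nu}\norm{v}{L_2(J)}^2$. Everything else is bookkeeping of constants.
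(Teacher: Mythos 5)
Your proposal matches the paper's argument: the paper likewise gives no independent proof but observes that local quasi-uniformity of $\PP_{\overline I}$ is equivalent to the distance condition, that any interval has at most three intersecting neighbours (so $\const{nei}=3$ and $C_1=2(\const{nei}+1)^2=32$), and that shape-regularity is trivial for intervals, whereupon the same reference \cite[Lemma~4.5]{gp20} (see \cite[Lemma~5.3.2]{gantner17}) applies verbatim. Your additional sketch of how the lower and upper bounds would be derived from scratch is consistent with that cited lemma's proof and introduces no gap.
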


%Since the single-layer operator is an isomorphism, given an arbitrary approximation $\Phi\in H^{-1/2,-1/4}(\Sigma)$ of the exact solution $\phi$ of \eqref{eq:single layer operator}, the error is given by
%\begin{align}
% \norm{\phi-\Phi}{H^{-1/2,-1/4}(\Sigma)} \eqsim \norm{\mathscr{V}(\phi-\Phi)}{H^{1/2,1/4}(\Sigma)}.
%\end{align}
%In the following main theorem of this work, we particularly provide estimates for the latter term.
The latter two propositions allow to derive the following {\slshape a posteriori error estimation}, which can be employed for arbitrary approximations $\Phi$.

\begin{theorem}\label{thm:localization}
Let $\mu,\nu\in(0,1)$ and $\PP$ be a prismatic mesh of $\Sigma$.
Then, there exist constants $\const{eff}', \const{rel}''>0$ such that for all $v\in H^{\mu,\nu}(\Sigma)$, there holds that
\begin{align}\label{eq:efficiency}
 \sum_{J\times K\in\PP} \Big(\sum_{{\tilde{J}\times\tilde{K}\in\PP \atop |J\cap \tilde{J}|>0} \atop K\cap\tilde{K} \neq \emptyset} |v|_{L_2(J\cap \tilde{J}; H^\mu(K\cup\tilde{K}))}^2
 + \sum_{{\tilde{J}\times\tilde{K}\in\PP \atop J\cap \tilde{J}\neq\emptyset} \atop |K\cap\tilde{K}| > 0} |v|_{H^\nu(J\cup \tilde{J}; L_2(K\cap\tilde{K}))}^2\Big)
 \le (\const{eff}')^2 \norm{v}{H^{\mu,\nu}(\Sigma)}^2
\end{align}
as well as
\begin{align}\label{eq:reliability}
\begin{split}
 (\const{rel}')^{-2}\norm{v}{H^{\mu,\nu}(\Sigma)}^2
 \le
 \sum_{J\times K\in\PP} \Big(\sum_{{\tilde{J}\times\tilde{K}\in\PP \atop |J\cap \tilde{J}|>0} \atop K\cap\tilde{K} \neq \emptyset} |v|_{L_2(J\cap \tilde{J}; H^\mu(K\cup\tilde{K}))}^2
 + \sum_{{\tilde{J}\times\tilde{K}\in\PP \atop J\cap \tilde{J}\neq\emptyset} \atop |K\cap\tilde{K}| > 0} |v|_{H^\nu(J\cup \tilde{J}; L_2(K\cap\tilde{K}))}^2\Big)
 \\
 +  \sum_{J\times K\in\PP} \big(\diam(K)^{-2\mu} + |J|^{-2\nu}\big)\norm{v}{L_2(J\times K)}^2;
\end{split}
\end{align}
see Figure~\ref{fig:mesh2d} for a visualization of the involved integration domains.
The constant $\const{eff}'$ is given as $\const{eff}'=\max(2(\const{nei}+1)^2,32)$ with $\const{nei}$ from~\eqref{eq:neighbors},
and $\const{rel}'$ depends only on $n$, $\mu$, $\nu$, $\Gamma$, $|I|$ and the constants $\const{dist}$ from~\eqref{eq:dist} as well as $\const{lqu}$ from \eqref{eq:local quasi-uniformity}.
\end{theorem}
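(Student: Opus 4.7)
The plan is to reduce Theorem~\ref{thm:localization} to the one-directional estimates in Propositions~\ref{prop:space-localization} and~\ref{prop:time-localization} by treating the spatial and temporal variables separately and combining the resulting estimates by Fubini. The starting point is the splitting
\begin{equation*}
\norm{v}{H^{\mu,\nu}(\Sigma)}^2 = \norm{v}{L_2(I;H^\mu(\Gamma))}^2 + \norm{v}{H^\nu(I;L_2(\Gamma))}^2,
\end{equation*}
where by Fubini the first summand equals $\int_I \norm{v(t,\cdot)}{H^\mu(\Gamma)}^2 \d t$ and the second equals $\int_\Gamma \norm{v(\cdot,\x)}{H^\nu(I)}^2 \d\x$. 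For almost every $t\in\overline I$ the slice $\PP|_t$ is a mesh of $\Gamma$ and the constants in~\eqref{eq:neighbors}--\eqref{eq:dist} are bounded by $\const{nei}$ and $\const{dist}$ uniformly in $t$, so Proposition~\ref{prop:space-localization} applies to $v(t,\cdot)$ with $t$-independent constants. Symmetrically, for almost every $\x\in\Gamma$ the slice $\PP|_\x$ is locally quasi-uniform with constant $\const{lqu}$, so Proposition~\ref{prop:time-localization} applies to $v(\cdot,\x)$ with $\x$-independent constants.

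For~\eqref{eq:efficiency} I would apply the upper bound $\sum(\cdot)\le C_1\norm{v}{H^\mu(\Gamma)}^2$ (with $C_1=2(\const{nei}+1)^2$) from Proposition~\ref{prop:space-localization} to $v(t,\cdot)$ for a.e.\ $t\in I$, and the analogous inequality with $C_1=32$ from Proposition~\ref{prop:time-localization} to $v(\cdot,\x)$ for a.e.\ $\x\in\Gamma$. Integrating, the key bookkeeping is the Fubini-type identity
\begin{equation*}
\int_I\! \sum_{K\in\PP|_t}\!\sum_{\substack{\tilde K\in\PP|_t\\K\cap\tilde K\neq\emptyset}}\!\! |v(t,\cdot)|_{H^\mu(K\cup\tilde K)}^2\, \d t = \!\!\sum_{J\times K\in\PP}\sum_{\substack{\tilde J\times\tilde K\in\PP\\|J\cap\tilde J|>0,\,K\cap\tilde K\neq\emptyset}}\! |v|_{L_2(J\cap\tilde J; H^\mu(K\cup\tilde K))}^2,
\end{equation*}
which reassembles the integrated slice-wise sums into the prism-pair sums in~\eqref{eq:efficiency}. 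The analogous identity with the roles of space and time interchanged handles the $H^\nu(I;L_2(\Gamma))$-part. Taking the larger of the two proposition constants then yields~\eqref{eq:efficiency} with $\const{eff}'$ as stated.

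For~\eqref{eq:reliability} I would instead apply the second (full) inequality of each proposition pointwise: for a.e.\ $t\in I$,
\begin{equation*}
\norm{v(t,\cdot)}{H^\mu(\Gamma)}^2 \le \sum_{K\in\PP|_t}\sum_{\substack{\tilde K\in\PP|_t\\ K\cap\tilde K\neq\emptyset}}\!|v(t,\cdot)|_{H^\mu(K\cup\tilde K)}^2 + C_2\!\!\sum_{K\in\PP|_t}\!\diam(K)^{-2\mu}\norm{v(t,\cdot)}{L_2(K)}^2,
\end{equation*}
and analogously for $\norm{v(\cdot,\x)}{H^\nu(I)}^2$ with weights $|J|^{-2\nu}$. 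Integrating in $t$ respectively $\x$, applying the same Fubini reorganization as before, and using the trivial identity $\int_I \sum_{K\in\PP|_t}\diam(K)^{-2\mu}\norm{v(t,\cdot)}{L_2(K)}^2\d t = \sum_{J\times K\in\PP}\diam(K)^{-2\mu}\norm{v}{L_2(J\times K)}^2$ together with its time-analogue, yields~\eqref{eq:reliability}. The main obstacle is essentially combinatorial: one must check carefully that the slicewise neighbor sums reassemble exactly into the prism-pair sums with the prescribed intersection conditions, and that the mesh-regularity constants in Propositions~\ref{prop:space-localization}--\ref{prop:time-localization} are genuinely uniform across slices; the latter follows directly from~\eqref{eq:neighbors}--\eqref{eq:local quasi-uniformity}, and no new analytical difficulty enters beyond what is already in the two cited propositions.
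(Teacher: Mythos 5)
Your proposal is correct and follows essentially the same route as the paper's proof: slice-wise application of Propositions~\ref{prop:space-localization} and~\ref{prop:time-localization} for a.e.\ $t$ and a.e.\ $\x$, followed by the Fubini/indicator-function reorganization of the slice-wise neighbor sums into the prism-pair sums (the paper writes your key bookkeeping identity explicitly via $\mathbbm{1}_J(t)\mathbbm{1}_{\tilde J}(t)$). Nothing is missing.
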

\begin{proof}
We split the proof into four steps.

\noindent{\bf Step~1:} In this step, we bound $\norm{v}{L_2(I;H^\mu(\Gamma))}$ from below.
Proposition~\ref{prop:space-localization} gives that
\begin{align*}
 \norm{v}{L_2(I;H^\mu(\Gamma))}^2
 &=\int_I \norm{v(t,\cdot)}{H^\mu(\Gamma)}^2 \d t
 \gtrsim \int_I \sum_{K\in\PP|_t} \sum_{\substack{\tilde{K}\in\PP|_t\\K\cap\tilde{K}\neq\emptyset}} |v(t,\cdot)|_{H^\mu(K\cup\tilde{K})}^2 \d t.
\end{align*}
Note that $K\in\PP|_t$ is equivalent to $J\times K\in\PP$ for some $J$ with $t\in J$.
With the indicator function $\mathbbm{1}_S$ of a set $S$, the last term thus is equal to
\begin{align*}
 \int_I \sum_{K\in\PP|_t} \sum_{\substack{\tilde{K}\in\PP|_t\\K\cap\tilde{K}\neq\emptyset}} |v(t,\cdot)|_{H^\mu(K\cup\tilde{K})}^2 \d t
 &= \int_I \sum_{J\times K\in\PP} \mathbbm{1}_J(t) \sum_{\substack{\tilde{J}\times\tilde{K}\in\PP\\K\cap\tilde{K}\neq\emptyset}} \mathbbm{1}_{\tilde{J}}(t) |v(t,\cdot)|_{H^\mu(K\cup\tilde{K})}^2 \d t
 \\
 &= \sum_{J\times K\in\PP} \sum_{{\tilde{J}\times\tilde{K}\in\PP \atop |J\cap \tilde{J}|>0} \atop K\cap\tilde{K} \neq \emptyset} |v|_{L_2(J\cap \tilde{J}; H^\mu(K\cup\tilde{K}))}^2.
\end{align*}

\noindent{\bf Step~2:} In this step, we bound $\norm{v}{L_2(I;H^\mu(\Gamma))}$ from above.
Proposition~\ref{prop:space-localization} gives that
\begin{align}
\notag
 \norm{v}{L_2(I;H^\mu(\Gamma))}^2
 &=\int_I \norm{v(t,\cdot)}{H^\mu(\Gamma)}^2 \d t
 \\
 \label{eq:estimate ut}
 &\lesssim \int_I \sum_{K\in\PP|_t} \sum_{\substack{\tilde{K}\in\PP|_t\\K\cap\tilde{K}\neq\emptyset}} |v(t,\cdot)|_{H^\mu(K\cup\tilde{K})}^2
 +  \sum_{K\in\PP|_t} \diam(K)^{-2\mu} \norm{v(t,\cdot)}{L_2(K)}^2 \d t.
\end{align}
The first term in~\eqref{eq:estimate ut}  has already been treated in Step~1.
%Defining $h\in L_\infty(Q)$ by $h|_{J\times K} := \diam(K)$ for all $J\times K\in \PP$, we see for the second term that
As $K\in\PP|_t$ is equivalent to $J\times K\in\PP$ for some $J$ with $t\in J$,
the second term reads
\begin{align*}
 \int_I\sum_{K\in\PP|_t} \diam(K)^{-2\mu} \norm{v(t,\cdot)}{L_2(K)}^2 \d t
 & =  \int_I\sum_{J\times K\in\PP} \mathbbm{1}_J(t) \, \diam(K)^{-2\mu} \norm{v(t,\cdot)}{L_2(K)}^2 \d t
% &= \int_I  \norm{h(t,\cdot)^{-\mu} v(t,\cdot)}{L_2(\Gamma)}^2 \d t
% \\
% &= \norm{h^{-\mu} v}{L_2(Q)}^2
 \\
 &= \sum_{J\times K\in\PP} \diam(K)^{-2\mu}\norm{v}{L_2(J\times K)}^2.
\end{align*}

\noindent{\bf Step~3:} In this step, we bound $\norm{v}{H^\nu(I;L_2(\Gamma))}$ from below.
The Fubini theorem, Proposition~\ref{prop:time-localization}, and the same argument as in Step~1 give that
\begin{align*}
 \norm{v}{H^\nu(I;L_2(\Gamma))}^2
 =\int_\Gamma \norm{v(\cdot,\x)}{H^\nu(I)}^2 \d \x
 &\gtrsim  \int_\Gamma \sum_{J\in\PP|_\x} \sum_{\substack{\tilde{J}\in\PP|_\x \\ J\cap \tilde{J}\neq\emptyset}} |v(\cdot,\x)|_{H^\nu(J\cup \tilde{J})}^2 \d\x
 \\
 &=  \int_\Gamma \sum_{J\times K\in\PP} \mathbbm{1}_{K}(\x) \sum_{\substack{\tilde{J}\times\tilde{K}\in\PP \\ J\cap \tilde{J}\neq\emptyset}} \mathbbm{1}_{\tilde{K}}(\x) |v(\cdot,\x)|_{H^\nu(J\cup \tilde{J})}^2 \d\x
 \\
 &= \sum_{J\times K\in\PP} \sum_{{\tilde{J}\times\tilde{K}\in\PP \atop J\cap \tilde{J}\neq\emptyset} \atop |K\cap\tilde{K}| > 0} |v|_{H^\nu(J\cup \tilde{J}; L_2(K\cap\tilde{K}))}^2.
\end{align*}

\noindent{\bf Step~4:} In this step, we bound $\norm{v}{H^\nu(I;L_2(\Gamma))}$ from above.
The Fubini theorem and Proposition~\ref{prop:time-localization} give that
\begin{align}
\notag
 \norm{v}{H^\nu(I;L_2(\Gamma))}^2
 &=\int_\Gamma \norm{v(\cdot,\x)}{H^\nu(I)}^2 \d \x
 \\
 \label{eq:estimate ux}
 &\lesssim \int_\Gamma \sum_{J\in\PP|_\x} \sum_{\substack{\tilde{J}\in\PP|_\x\\ J\cap \tilde{J}\neq\emptyset}} |v(\cdot,\x)|_{H^\nu(J\cup \tilde{J})}^2
 +  \sum_{J\in\PP|_\x} |J|^{-2\nu} \norm{v(\cdot,\x)}{L_2(J)}^2 \d \x.
\end{align}
The first term in~\eqref{eq:estimate ux}  has already been treated in Step~3.
%For the second one, we proceed as in Step~2.
%Defining $k\in L_\infty(Q)$ by $k|_{J\times K} := |J|$ for all $J\times K\in \PP$, we see that
The second term reads
\begin{align*}
 \int_\Gamma\sum_{J\in\PP|_\x} |J|^{-2\nu} \norm{v(\cdot,\x)}{L_2(J)}^2 \d \x
 &=  \int_\Gamma\sum_{J\times K\in\PP}\mathbbm{1}_K(\x)\, |J|^{-2\nu} \norm{v(\cdot,\x)}{L_2(J)}^2 \d \x
 \\
 &= \sum_{J\times K\in\PP} |J|^{-2\nu}\norm{v}{L_2(J\times K)}^2.
\end{align*}
This concludes the proof.
\end{proof}

%%%%%%%%%%%%%%%%%%%%%%%%%%%%%%%%%%%%%%%%%%%%%%%%%%%%%%%%%%%%%%%%%%%%%%%%%%%%%%%%%%%%%%%%%%%%%%%%%%%%%%%%%%%%%%%%%%%
\subsection{Poincar\'e-type inequality}
%%%%%%%%%%%%%%%%%%%%%%%%%%%%%%%%%%%%%%%%%%%%%%%%%%%%%%%%%%%%%%%%%%%%%%%%%%%%%%%%%%%%%%%%%%%%%%%%%%%%%%%%%%%%%%%%%%%
Assuming the grading $|J| \eqsim \diam(K)^{\mu/\nu}$ as well as $L_2(\Sigma)$-orthogonality of $v$ to piecewise constants, the following local Poincar\'e-type inequality allows to get rid of the weighted $L_2$-terms in \eqref{eq:reliability}.
The proof works essentially as in \cite[Proposition~5.3]{costabel90}, where a global version on uniform meshes is considered.

\begin{lemma}\label{lem:poincare}
Let $\mu,\nu\in(0,1)$ and $\PP$ be a prismatic mesh of $\Sigma$.
Then, there holds for all $v\in H^{\mu,\nu}(\Sigma)$ and all $J\times K \in \PP$ with $\dual{v}{1}_{L_2(J\times K)}=0$ that
\begin{align}\label{eq:poinccare}
 \norm{v}{L_2(J\times K)}^2
 \le
 \const{shape} \big(\diam(K)^{2\mu} |v|_{L_2(J; H^\mu(K))}^2 +  |J|^{2\nu} |v|_{H^\nu(J; L_2(K))}^2\big).
\end{align}
Here, $\const{shape}\ge1$ is the constant from~\eqref{eq:shape}.
\end{lemma}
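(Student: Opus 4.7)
The plan is a standard averaging/telescoping argument in the spirit of the one-variable Poincar\'e inequality, adapted to the anisotropic setting, and using the zero-mean hypothesis to rewrite $v(t,\x)$ as a double integral of differences.

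First, the hypothesis $\dual{v}{1}_{L_2(J\times K)}=0$ yields
\begin{align*}
|J||K|\,v(t,\x)
= \int_J\!\int_K \bigl(v(t,\x)-v(s,\y)\bigr)\d\y\d s.
\end{align*}
I would then telescope through the intermediate point $(t,\y)$ to split the spatial and temporal oscillations:
\begin{align*}
|J||K|\,v(t,\x)
= |J|\int_K \bigl(v(t,\x)-v(t,\y)\bigr)\d\y
+ \int_J\!\int_K \bigl(v(t,\y)-v(s,\y)\bigr)\d\y\d s.
\end{align*}
Squaring, applying $(a+b)^2\le 2a^2+2b^2$, and integrating over $(t,\x)\in J\times K$ gives
\begin{align*}
(|J||K|)^2 \norm{v}{L_2(J\times K)}^2
\le 2\,\mathrm{I} + 2\,\mathrm{II},
\end{align*}
where $\mathrm{I}$ involves only spatial increments at fixed $t$ and $\mathrm{II}$ involves only temporal increments at fixed $\y$ (after pulling the $\x$-integration through, which produces a factor $|K|$).

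Next I would introduce the Sobolev--Slobodeckij weights by Cauchy--Schwarz. For $\mathrm{I}$, write
\begin{align*}
\Big(\int_K (v(t,\x)-v(t,\y))\d\y\Big)^2
\le \int_K |\x-\y|^{\dim(K)+2\mu}\d\y \cdot \int_K \frac{|v(t,\x)-v(t,\y)|^2}{|\x-\y|^{\dim(K)+2\mu}}\d\y,
\end{align*}
bound the first factor by $|K|\diam(K)^{\dim(K)+2\mu}$, and integrate over $(t,\x)\in J\times K$ to obtain $\mathrm{I}\le |J|^2 |K|\diam(K)^{\dim(K)+2\mu} |v|_{L_2(J;H^\mu(K))}^2$. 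For $\mathrm{II}$, apply the analogous Cauchy--Schwarz trick with the temporal weight $|t-s|^{-1-2\nu}$ on the inner $s$-integral (together with a plain Cauchy--Schwarz in $\y$ to free the $L_2(K)$-structure) and pick up a factor $|J|^{1+2\nu}|K|$; integrating over the remaining variables yields $\mathrm{II}\le |J|^{2+2\nu}|K|^2 |v|_{H^\nu(J;L_2(K))}^2$.

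Finally, dividing both sides by $(|J||K|)^2$ and using the shape-regularity bound $\diam(K)^{\dim(K)}\le \const{shape}|K|$ from~\eqref{eq:shape} to absorb the spurious factor $\diam(K)^{\dim(K)}/|K|$ gives exactly~\eqref{eq:poinccare}. The only point that requires care is the bookkeeping of the weights in Cauchy--Schwarz and making sure the two terms separate cleanly into an $L_2(J;H^\mu(K))$-seminorm and an $H^\nu(J;L_2(K))$-seminorm; everything else is routine Fubini and Cauchy--Schwarz.
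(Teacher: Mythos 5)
Your argument is in substance the same as the paper's: your pointwise telescoping through the intermediate point $(t,\y)$ is exactly the splitting $v-\Pi_{J\times K}v=(1-\mathrm{Id}\otimes\Pi_K)v+(\mathrm{Id}\otimes\Pi_K)(1-\Pi_J\otimes\mathrm{Id})v$ that the paper performs with $L_2$-projections (the paper projects in time first, you project in space first, which is immaterial), and your weighted Cauchy--Schwarz step is precisely the proof of the one-dimensional Poincar\'e lemma that the paper cites from Faermann. So the route is correct and essentially identical, just self-contained. The one discrepancy is the constant: your estimate $\bigl(\int_K(w(\x)-w(\y))\d\y\bigr)^2\le|K|\diam(K)^{\dim(K)+2\mu}\int_K\frac{|w(\x)-w(\y)|^2}{|\x-\y|^{\dim(K)+2\mu}}\d\y$, integrated in $\x$ and combined with $(a+b)^2\le2a^2+2b^2$, yields $2\const{shape}\bigl(\diam(K)^{2\mu}|v|^2_{L_2(J;H^\mu(K))}+|J|^{2\nu}|v|^2_{H^\nu(J;L_2(K))}\bigr)$, i.e.\ a factor $2$ worse than the stated bound. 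The cited lemma carries an extra $\tfrac12$ that exactly cancels this, and you can recover it in your framework via the symmetrization identity for the zero-mean function, $\norm{w-\Pi_K w}{L_2(K)}^2=\frac{1}{2|K|}\int_K\int_K|w(\x)-w(\y)|^2\d\y\d\x$ (and its temporal analogue), applied before the weighted Cauchy--Schwarz instead of your cruder $\x$-pointwise bound. With that adjustment the constants match; without it your proof still establishes the lemma up to an absolute factor $2$, which would propagate into the explicit constants of Corollary~\ref{cor:main}.
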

\begin{proof}
Let $\Pi_J$, $\Pi_K$, and $\Pi_{J\times K}$ denote the $L_2$-orthogonal projection onto the space of constants on $J$, $K$, and $J\times K$, respectively.
Note that $\Pi_{J\times K}=\Pi_J\otimes \Pi_K$ and thus
\begin{align}
\notag
 \norm{v}{L_2(J\times K)}
 &=  \norm{(1-\Pi_{J\times K})v}{L_2(J\times K)}
 \\
 \label{eq:projections}
 &\le  \norm{(1-\Pi_{J}\otimes{\rm Id})v}{L_2(J\times K)} + \norm{(\Pi_J\otimes{\rm Id}-\Pi_{J}\otimes\Pi_K)v}{L_2(J\times K)}.
\end{align}
As $\Pi_J$ has operator norm $1$, a standard Poincaré-type inequality, see, e.g., \cite[Lemma~3.4]{faermann02} for the elementary proof, shows for the second term in \eqref{eq:projections} that
\begin{align*}
 \norm{(\Pi_J\otimes{\rm Id}-\Pi_{J}\otimes\Pi_K)v}{L_2(J\times K)}^2
 &\le  \norm{(1-{\rm Id}\otimes\Pi_K)v}{L_2(J\times K)}^2
 \\
 &=\int_J \norm{(1-\Pi_K) v(t,\cdot)}{L_2(K)}^2 \d t
 \\
 &\le \frac{\diam(K)^{n-1+2\mu}}{2|K|} \int_J |v(t,\cdot)|_{H^\mu(K)}^2 \d t
 \\
 &\le \frac{\const{shape}}{2}\, \diam(K)^{2\mu} |v|_{L_2(J;H^\mu(K)}^2.
\end{align*}
The first term in \eqref{eq:projections} can be estimated similarly
\begin{align*}
 \norm{(1-\Pi_{J}\otimes{\rm Id})v}{L_2(J\times K)}^2
% = \int_K \norm{(1-\Pi_J) v(\cdot,\x)}{L_2(J)}^2 \d\x
% \le \frac{|J|^{2\nu}}{2}  \int_K|v(\cdot,\x)|_{H^\nu(J)}^2 \d\x
 \le \frac{1}{2}|J|^{2\nu} |v|_{H^\nu(J;L_2(K))}^2,
\end{align*}
which concludes the proof.
\end{proof}

%%%%%%%%%%%%%%%%%%%%%%%%%%%%%%%%%%%%%%%%%%%%%%%%%%%%%%%%%%%%%%%%%%%%%%%%%%%%%%%%%%%%%%%%%%%%%%%%%%%%%%%%%%%%%%%%%%%
\subsection{A posteriori error estimators}\label{sec:estimators}
%%%%%%%%%%%%%%%%%%%%%%%%%%%%%%%%%%%%%%%%%%%%%%%%%%%%%%%%%%%%%%%%%%%%%%%%%%%%%%%%%%%%%%%%%%%%%%%%%%%%%%%%%%%%%%%%%%%
For arbitrary prismatic meshes $\PP$ of $\Sigma$ with some associated trial space $\XX\subset H^{-1/2,-1/4}(\Sigma)$ and $\Phi\in\XX$, we define the following error indicators for all $J\times K\in\PP$,
\begin{align*}
 \eta_{\PP}^\x(\Phi,J\times K) &:= \sum_{{\tilde{J}\times\tilde{K}\in\PP \atop |J\cap \tilde{J}|>0} \atop K\cap\tilde{K} \neq \emptyset} |f-\mathscr{V} \Phi|_{L_2(J\cap \tilde{J}; H^{1/2}(K\cup\tilde{K}))}^2,
 \\
 \eta_{\PP}^t(\Phi,J\times K)^2&:= \sum_{{\tilde{J}\times\tilde{K}\in\PP \atop J\cap \tilde{J}\neq\emptyset} \atop |K\cap\tilde{K}| > 0} |f-\mathscr{V} \Phi|_{H^{1/4}(J\cup \tilde{J}; L_2(K\cap\tilde{K}))}^2,
 \\
 \zeta_{\PP}^\x(\Phi,J\times K) &:= \diam(K)^{-1} \norm{f-\mathscr{V} \Phi}{L_2(J\times K)}^2,
 \\
 \zeta_{\PP}^t(\Phi,J\times K)^2&:= |J|^{-1/2} \norm{f-\mathscr{V} \Phi}{L_2(J\times K)}^2.
\end{align*}
The corresponding error estimators read as
\begin{align*}
 \eta_\PP(\Phi)^2 := \sum_{J\times K\in\PP}  \eta_{\PP}(\Phi,J\times K)^2
 \quad\text{with }
 \eta_{\PP}(\Phi,J\times K) ^2 := \eta_{\PP}^\x(\Phi,J\times K)^2 + \eta_{\PP}^t(\Phi,J\times K)^2,
 \\
  \zeta_\PP(\Phi)^2 := \sum_{J\times K\in\PP}  \zeta_{\PP}(\Phi,J\times K)^2
 \quad\text{with }
 \zeta_{\PP}(\Phi,J\times K) ^2 := \zeta_{\PP}^\x(\Phi,J\times K)^2 + \zeta_{\PP}^t(\Phi,J\times K)^2.
\end{align*}
With \eqref{eq:residual norm}, we overall obtain the following {\slshape a posteriori} estimates.

\begin{corollary}\label{cor:main}
Let $\phi$ be the solution of \eqref{eq:single layer operator} and $\PP$ be a prismatic mesh of $\Sigma$ with some associated discrete trial space $\XX\subset H^{-1/2,-1/4}(\Sigma)$.
Then, there exist constants $\const{eff}, \tilde C_{\rm rel}>0$ such that for arbitrary $\Phi\in\XX$, there holds that
\begin{align}\label{eq:efficiency and reliability}
 \const{eff}^{-1} \eta_\PP(\Phi)
 \le \norm{\phi-\Phi}{H^{-1/2,-1/4}(\Sigma)}
 \le \tilde C_{\rm rel} \big(\eta_\PP(\Phi)^2 + \zeta_\PP(\Phi)^2\big)^{1/2}.
\end{align}
If the space $\XX$ contains all $\PP$-piecewise constant functions and $\Phi\in\XX$ is the Galerkin approximation of $\phi$, there further holds that
\begin{align}
 \zeta_\PP(\Phi,J\times K)^2 \le \const{shape} \big(\diam(K)^{-1}+|J|^{-1/2}\big)\big(\diam(K) + |J|^{1/2}\big) \eta_{\PP}(\Phi,J\times K)^2
\end{align}
for all $J\times K\in\PP$.
If $\const{grad}^{-1}\diam(K)\le |J|^{1/2}\le \const{grad} \diam(K)$ is satisfied for all $J\times K\in\PP$ and a uniform constant $\const{grad}\ge1$, this implies the existence of a constant $\const{rel}>0$ such that
\begin{align}\label{eq:reliability2}
 \norm{\phi-\Phi}{H^{-1/2,-1/4}(\Sigma)} \le \const{rel} \eta_\PP(\Phi).
\end{align}
The constants $\const{eff}$ and $\tilde C_{\rm rel}$ are given as $\const{eff}=\const{eff}'\,\const{cont}$ and $\const{rel}=\const{rel}'/c_{\rm coe}$ with $\const{eff}'$ and $\const{rel}'$ from Theorem~\ref{thm:localization},  the operator norm $\const{cont}$ of $\mathscr V$, and $c_{\rm coe}$ from~\eqref{eq:coercivity}.
The constant $\const{rel}$ is given as $\const{rel}=\tilde C_{\rm rel}\sqrt{2\const{shape}(1+\const{grad})}$.
\qed
\end{corollary}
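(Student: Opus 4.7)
The proof will proceed by combining the residual equivalence \eqref{eq:residual norm}, the localization result from Theorem~\ref{thm:localization}, and the Poincar\'e-type inequality from Lemma~\ref{lem:poincare}, each applied with the specific choice $\mu=1/2$, $\nu=1/4$ to the residual $v:=f-\mathscr V\Phi \in H^{1/2,1/4}(\Sigma)$. None of these steps is deep on its own; the task is mainly a careful bookkeeping of constants.

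For the first chain of estimates in \eqref{eq:efficiency and reliability}, I would start from \eqref{eq:residual norm}. The lower bound is obtained by applying \eqref{eq:efficiency} of Theorem~\ref{thm:localization} to $v=f-\mathscr V\Phi$, noting that the left-hand side of \eqref{eq:efficiency} is precisely $\eta_\PP(\Phi)^2$ and that $\|f-\mathscr V\Phi\|_{H^{1/2,1/4}(\Sigma)} = \|\mathscr V(\phi-\Phi)\|_{H^{1/2,1/4}(\Sigma)} \le \const{cont}\,\|\phi-\Phi\|_{H^{-1/2,-1/4}(\Sigma)}$, which delivers $\const{eff}=\const{eff}'\,\const{cont}$. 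For the upper bound, I would use coercivity \eqref{eq:coercivity} in its standard way, $c_{\rm coe}\|\phi-\Phi\|_{H^{-1/2,-1/4}(\Sigma)} \le \|f-\mathscr V\Phi\|_{H^{1/2,1/4}(\Sigma)}$, combined with \eqref{eq:reliability} of Theorem~\ref{thm:localization} applied to $v=f-\mathscr V\Phi$, whose right-hand side is $\eta_\PP(\Phi)^2+\zeta_\PP(\Phi)^2$, yielding $\tilde C_{\rm rel}=\const{rel}'/c_{\rm coe}$.

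For the second estimate, the key observation is that if $\XX$ contains all $\PP$-piecewise constants, then Galerkin orthogonality \eqref{eq:orthogonality} tested with the indicator $\mathbbm{1}_{J\times K}\in\XX$ yields $\dual{f-\mathscr V\Phi}{1}_{L_2(J\times K)} = \dual{\mathscr V(\phi-\Phi)}{\mathbbm{1}_{J\times K}}_\Sigma = 0$, so that Lemma~\ref{lem:poincare} applies with $v=f-\mathscr V\Phi$ on each single element $J\times K$. This gives
\begin{align*}
 \|f-\mathscr V\Phi\|_{L_2(J\times K)}^2 \le \const{shape}\bigl(\diam(K)^{2\cdot 1/2}|f-\mathscr V\Phi|_{L_2(J;H^{1/2}(K))}^2+|J|^{2\cdot 1/4}|f-\mathscr V\Phi|_{H^{1/4}(J;L_2(K))}^2\bigr).
\end{align*}
Multiplying by $\diam(K)^{-1}+|J|^{-1/2}$ produces $\zeta_\PP(\Phi,J\times K)^2$ on the left, and on the right the bracket becomes at most $(\diam(K)+|J|^{1/2})\bigl(|f-\mathscr V\Phi|_{L_2(J;H^{1/2}(K))}^2+|f-\mathscr V\Phi|_{H^{1/4}(J;L_2(K))}^2\bigr)$; the second factor is dominated by $\eta_\PP(\Phi,J\times K)^2$, since the choice $\tilde J=J$, $\tilde K=K$ is one admissible summand in each of the definitions of $\eta_\PP^\x$ and $\eta_\PP^t$. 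This proves the pointwise bound on $\zeta_\PP(\Phi,J\times K)$.

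For the final estimate \eqref{eq:reliability2}, I would use the grading condition $\const{grad}^{-1}\diam(K)\le |J|^{1/2}\le \const{grad}\diam(K)$ to bound
\begin{align*}
 (\diam(K)^{-1}+|J|^{-1/2})(\diam(K)+|J|^{1/2}) = 2 + \frac{\diam(K)}{|J|^{1/2}} + \frac{|J|^{1/2}}{\diam(K)} \le 2(1+\const{grad}),
\end{align*}
so summing the previous elementwise inequality over $\PP$ yields $\zeta_\PP(\Phi)^2 \le 2\const{shape}(1+\const{grad})\,\eta_\PP(\Phi)^2$. Substituting this into the upper bound of \eqref{eq:efficiency and reliability} and using $1\le 2\const{shape}(1+\const{grad})$ yields \eqref{eq:reliability2} with $\const{rel}=\tilde C_{\rm rel}\sqrt{2\const{shape}(1+\const{grad})}$. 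The only subtlety worth double-checking is the verification that Galerkin orthogonality really produces the vanishing-mean condition required by Lemma~\ref{lem:poincare}, i.e., that $\mathbbm{1}_{J\times K}$ belongs to $\XX\subset H^{-1/2,-1/4}(\Sigma)$; this is the reason for the explicit hypothesis that $\XX$ contains all $\PP$-piecewise constants.
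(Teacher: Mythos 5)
Your proposal is correct and follows exactly the argument the paper leaves implicit (the corollary is stated with \qed): combine the residual equivalence \eqref{eq:residual norm} with Theorem~\ref{thm:localization} for $\mu=1/2$, $\nu=1/4$, then use Galerkin orthogonality against $\mathbbm{1}_{J\times K}$ to invoke Lemma~\ref{lem:poincare} elementwise, and finally absorb $\zeta_\PP$ into $\eta_\PP$ under the grading assumption. The only nitpick is that your chain actually yields $\const{rel}=\tilde C_{\rm rel}\sqrt{1+2\const{shape}(1+\const{grad})}$ rather than the slightly smaller value stated in the corollary, but this imprecision is the paper's, not yours, and is immaterial.
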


\begin{remark}
    According to~\eqref{eq:rates}, the required scaling $|J| \eqsim \diam(K)^2$, i.e., $\sigma=2$, is the optimal scaling for approximating smooth solutions $\phi$
    if the polynomial degrees of $\XX$ satisfy $p_\x=2p_t+1$.
\end{remark}

%%%%%%%%%%%%%%%%%%%%%%%%%%%%%%%%%%%%%%%%%%%%%%%%%%%%%%%%%%%%%%%%%%%%%%%%%%%%%%%%%%%%%%%%%%%%%%%%%%%%%%%%%%%%%%%%%%%
%%%%%%%%%%%%%%%%%%%%%%%%%%%%%%%%%%%%%%%%%%%%%%%%%%%%%%%%%%%%%%%%%%%%%%%%%%%%%%%%%%%%%%%%%%%%%%%%%%%%%%%%%%%%%%%%%%%
\section{Numerical experiments}\label{sec:numerics}
%%%%%%%%%%%%%%%%%%%%%%%%%%%%%%%%%%%%%%%%%%%%%%%%%%%%%%%%%%%%%%%%%%%%%%%%%%%%%%%%%%%%%%%%%%%%%%%%%%%%%%%%%%%%%%%%%%%
%%%%%%%%%%%%%%%%%%%%%%%%%%%%%%%%%%%%%%%%%%%%%%%%%%%%%%%%%%%%%%%%%%%%%%%%%%%%%%%%%%%%%%%%%%%%%%%%%%%%%%%%%%%%%%%%%%%

In this section, we employ the error estimator $\eta$ within an adaptive algorithm using different refinement strategies, and investigate the resulting convergence rates.
We restrict ourselves to the case $n=2$,  with $\Gamma = \partial \Omega$ being the boundary of a polygonal domain $\Omega \subset \R^2$, and set the time domain to be $I = (0,1)$.

For $\PP$ a prismatic mesh of the space-time boundary, i.e., a quadrilateral mesh as $n=2$, we consider the trial space $\XX$ of piecewise constants with respect to $\PP$.
In particular, this allows us to perform integration in time analytically for all integrals that are involved in the computation of the Galerkin matrix and the evaluation of the single-layer operator $\mathscr{V}$; see, e.g., \cite{costabel90}.
The remaining integrals over $\Gamma$ have a logarithmic singularity, for which we use the quadrature rules from~\cite{smith00}.
For the computation of the Sobolev--Slobodeckij seminorm in the Faermann estimator $\eta_\PP(\Phi)$,  we use Duffy transformations and Gauss quadrature for the regularized integrands.
Further details on the numerical computation of the involved singular integrals are found in Appendix~\ref{sec:computation}.
The source code that we used to generate the numerical results is available at~\cite{gvv21}.

%%%%%%%%%%%%%%%%%%%%%%%%%%%%%%%%%%%%%%%%%%%%%%%%%%%%%%%%%%%%%%%%%%%%%%%%%%%%%%%%%%%%%%%%%%%%%%%%%%%%%%%%%%%%%%%%%%%
\subsection{Adaptive algorithm}
%%%%%%%%%%%%%%%%%%%%%%%%%%%%%%%%%%%%%%%%%%%%%%%%%%%%%%%%%%%%%%%%%%%%%%%%%%%%%%%%%%%%%%%%%%%%%%%%%%%%%%%%%%%%%%%%%%%
In our numerical experiments below, we employ the following adaptive algorithm with $\theta = 0.9$.

\begin{algorithm}\label{alg:main}
Let $0<\theta\le1$ be a marking parameter and $\PP = \set{J\times K}{J\in \PP_{\overline I}, K\in\PP_\Gamma}$ be an initial tensor-mesh corresponding to a mesh $\PP_\Gamma$ of $\Gamma$  and a mesh $\PP_{\overline I}$ of $\overline I=[0,T]$.
For each $\ell = 0, 1, 2, \dots$, iterate the following steps:
\begin{itemize}
 \item[\rmfamily(i)] Compute Galerkin approximation $\Phi_\ell$ of $\phi$ in the space $\XX_\ell$ of all $\PP_\ell$-piecewise constant functions on $\Sigma$.
\item[\rmfamily(ii)] Compute indicators $\eta_{\PP_\ell}^\x(\Phi_\ell,J\times K)$ and $\eta_{\PP_\ell}^t(\Phi_\ell,J\times K)$ for all elements $J\times K\in\PP_\ell$.
\item[\rmfamily(iii)] Determine two minimal sets of marked elements $\MM_\ell^\x,\MM^t_\ell\subseteq\PP_\ell$ such that
\begin{align}\label{eq:marking}
 \theta^2 \eta_{\PP_\ell}(\Phi_\ell)^2 \le  \sum_{J \times K \in \mathcal{M}^\x_\ell} \eta^\x_{\PP_\ell}(\Phi_\ell, J \times K)^2 + \sum_{J \times K \in \mathcal{M}^t_\ell} \eta^t_{\PP_\ell}(\Phi_\ell, J \times K)^2.
\end{align}
\item[\rmfamily(iv)] Refine at least all marked elements to obtain a new mesh $\PP_{\ell + 1}$.
\end{itemize}
\end{algorithm}

We will focus on \emph{isotropic} and \emph{anisotropic} adaptive strategies:
\begin{itemize}
\item In isotropic refinement, we require $\MM_\ell^\x = \MM_\ell^t$ in the marking step {\rm (iii)}, so that \eqref{eq:marking} simplifies to
$\theta^2 \eta_{\PP_\ell}(\Phi_\ell)^2 \le \sum_{J \times K \in \mathcal{M}_\ell} \eta_{\PP_\ell}(\Phi_\ell, J \times K)^2$.
In the refinement step~{\rm (iv)}, we iteratively mark additional elements to ensure that, after subdividing all marked elements into four congruent rectangles, the new mesh $\PP_{\ell+1}$ has only one hanging node per edge.
\item In anisotropic refinement, we bisect the elements $\MM_\ell^\x \setminus \MM_\ell^t$ in space, bisect the elements $\MM_\ell^t \setminus \MM_\ell^\x$ in time, and subdivide all elements $\MM_\ell^\x \cap \MM_\ell^t$ into four congruent rectangles. Then, we iteratively bisect additional elements in space and/or time to ensure that the level difference in space and in time between elements sharing an edge in the new mesh $\PP_{\ell+1}$ is bounded by $1$.
Here, the level in space and the level in time of elements  are defined as the number of bisections in space and time, respectively, to obtain the element from the initial mesh $\PP_0$.
\end{itemize}
For comparison, we also include uniform refinement, where $\PP_{\ell+1}$ is obtained from $\PP_\ell$ by subdividing each element into four congruent rectangles.
For all considered refinement strategies, it is easy to see that the mesh constants from~\eqref{eq:neighbors}--\eqref{eq:local quasi-uniformity} corresponding to $(\PP_\ell)_{\ell\in\N_0}$ depend only on the initial mesh $\PP_0$.

%%%%%%%%%%%%%%%%%%%%%%%%%%%%%%%%%%%%%%%%%%%%%%%%%%%%%%%%%%%%%%%%%%%%%%%%%%%%%%%%%%%%%%%%%%%%%%%%%%%%%%%%%%%%%%%%%%%
\subsection{Reference for exact error}
%%%%%%%%%%%%%%%%%%%%%%%%%%%%%%%%%%%%%%%%%%%%%%%%%%%%%%%%%%%%%%%%%%%%%%%%%%%%%%%%%%%%%%%%%%%%%%%%%%%%%%%%%%%%%%%%%%%
As the exact error $\norm{\phi-\Phi}{H^{-1/2,-1/4}(\Sigma)}$ cannot be readily computed in the examples below, we compare the error estimator $\eta_\PP$ and the weighted $L_2$-terms $\zeta_\PP$ from Section~\ref{sec:estimators} with the following $(h-h/2)$-estimator:
For a mesh $\PP$, define the uniformly refined mesh as $\widehat{\PP}$.
With the  the Galerkin approximation $\widehat{\Phi}$ from the refined trial space, we define the $(h-h/2)$-estimator as
\[
    \|\Phi - \widehat{\Phi}\|_{\mathscr{V}}^2 := \dual{ \mathscr{V}(\Phi - \widehat{\Phi})}{\Phi - \widehat{\Phi}}_\Sigma.
\]
Under the saturation assumption $\|\phi - \widehat{\Phi}\|_{\mathscr{V}} \le q_{\rm sat} \|\phi - \Phi\|_{\mathscr{V}}$, the triangle inequality shows that this estimator is equivalent to $\|\phi - \Phi\|_{\mathscr{V}}$, and therefore to the error $\|\phi - \Phi\|_{H^{-1/2,-1/4}(\Sigma)}$ by coercivity of $\mathscr{V}$.
Note that the saturation assumption is indeed satisfied under the realistic (asymptotic) assumption that $\|\phi - \Phi\|_{\mathscr{V}} = \mathcal O\left((\#\PP)^{-s}\right)$ for some arbitrary rate $s>0$.

%%%%%%%%%%%%%%%%%%%%%%%%%%%%%%%%%%%%%%%%%%%%%%%%%%%%%%%%%%%%%%%%%%%%%%%%%%%%%%%%%%%%%%%%%%%%%%%%%%%%%%%%%%%%%%%%%%%
\subsection{Smooth problem}\label{sec:smooth}
%%%%%%%%%%%%%%%%%%%%%%%%%%%%%%%%%%%%%%%%%%%%%%%%%%%%%%%%%%%%%%%%%%%%%%%%%%%%%%%%%%%%%%%%%%%%%%%%%%%%%%%%%%%%%%%%%%%
Let $\Omega = (0,1)^2$ and consider the (smooth) solution $u(t,x_1,x_2) := \exp(-2\pi^2 t) \sin(\pi x_1) \sin(\pi x_2)$ with initial condition $u_0(x_1,x_2) := \sin(\pi x_1) \sin (\pi x_2)$ and Dirichlet data $u_D \equiv 0$.
We choose $\PP_0:=\set{[0,1]\times K}{K\in\PP_\Gamma}$ with the uniform mesh $\PP_\Gamma$ of $\Gamma$ being aligned with the corners and consisting of four elements,
as initial mesh of the space-time boundary $\Sigma$.

Figure~\ref{fig:smooth} displays the results in double-logarithmic plots so that the slopes of the lines indicate the corresponding convergence rates.
With the number of degrees of freedom $N=\#\PP$, we see the expected rate $\OO(N^{-5/8})=\OO(N^{-0.625})$ from~\eqref{eq:rates} for both uniform refinement and isotropic refinement (with still slightly worse rate for the weighted $L_2$-terms $\zeta_\PP(\Phi)$ for uniform refinement), albeit adaptive isotropic refinement offers quantitively  better results. For anisotropic refinement refinement, the rate is improved to $\OO(N^{-15/22}) \approx \OO(N^{-0.68})$.
According to \eqref{eq:rates}, this coincides with  the best possible rate that can be achieved with uniform tensor-meshes, where the optimal scaling parameter in $h_t \eqsim h_\x^\sigma$ is given by $\sigma=6/5$.
Note that we do not require setting an explicit scaling in our anisotropic adaptive algorithm, it recovers the optimal rate automatically.

\begin{figure}
\centering
\includegraphics[width=\linewidth]{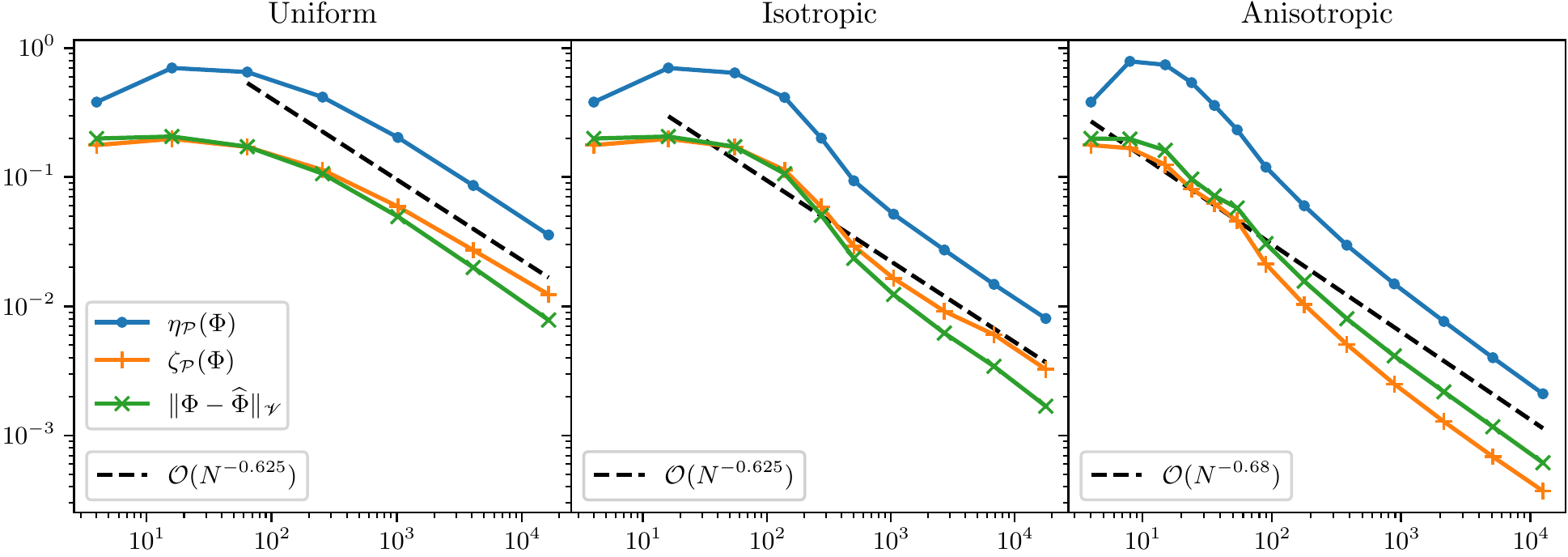}
\vspace{-1em}
\caption{
Error estimators for the smooth problem of Section~\ref{sec:smooth} plotted double-logarithmically over the degrees of freedom $N=\#\PP$:  uniform refinement (left), isotropic refinement (middle), and anisotropic refinement (right).
}\label{fig:smooth}
\end{figure}

%%%%%%%%%%%%%%%%%%%%%%%%%%%%%%%%%%%%%%%%%%%%%%%%%%%%%%%%%%%%%%%%%%%%%%%%%%%%%%%%%%%%%%%%%%%%%%%%%%%%%%%%%%%%%%%%%%%
\subsection{Mildly singular problem}\label{sec:mildly singular}
%%%%%%%%%%%%%%%%%%%%%%%%%%%%%%%%%%%%%%%%%%%%%%%%%%%%%%%%%%%%%%%%%%%%%%%%%%%%%%%%%%%%%%%%%%%%%%%%%%%%%%%%%%%%%%%%%%%
Let $\Omega = (0,1)^2$, with initial condition $u_0 \equiv 0$ and Dirichlet data $u_D(t,x_1,x_2) := t^2$. We expect the solution here to be only singular in the four corners of the unit square as the
initial condition is compatible with the Dirichlet data.
The initial mesh $\PP_0$ is chosen as in Section~\ref{sec:smooth}.
Figure~\ref{fig:mildly singular} displays the results. The assymptotic decay rate for all estimators under uniform refinement seems to be $\OO(N^{-1/3})$,
which is improved to $\OO(N^{-1/2})$ for isotropic refinement, and finally, under anistriopic refinement this becomes the optimal rate $\OO(N^{-15/22})$.

\begin{figure}
\centering
\includegraphics[width=\linewidth]{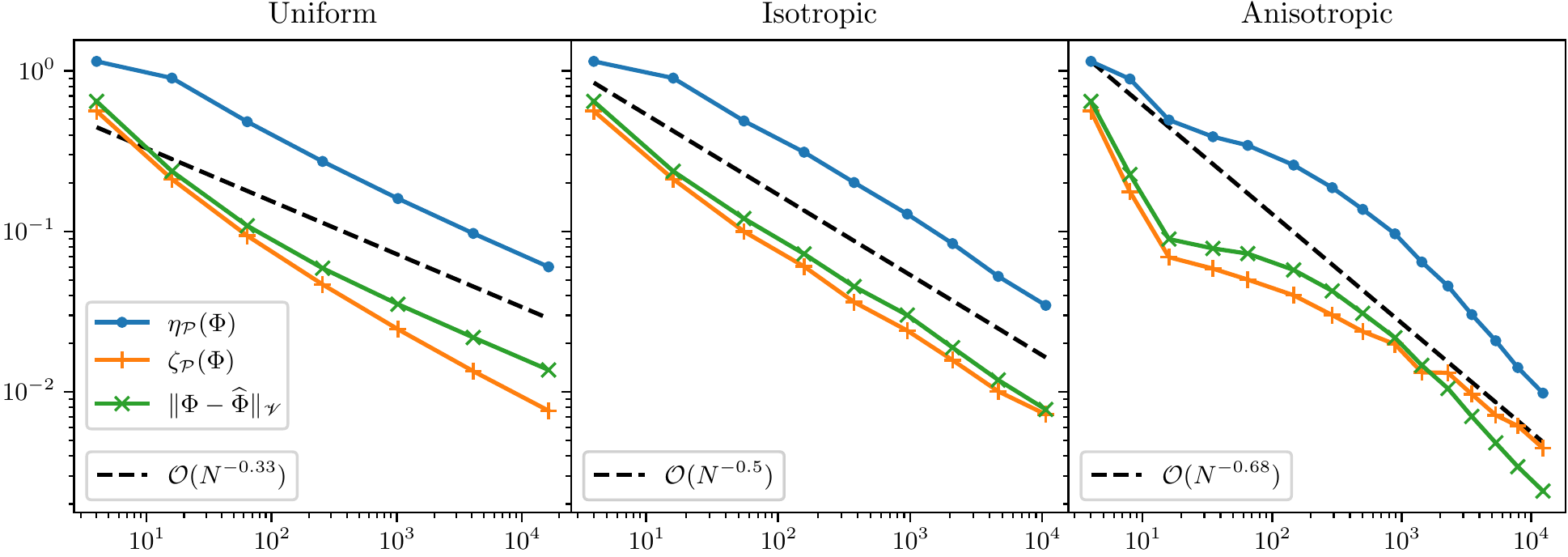}
\vspace{-1em}
\caption{Error estimators for the mildly singular problem of Section~\ref{sec:mildly singular} plotted double-logarithmically over the degrees of freedom $N=\#\PP$:  uniform refinement (left), isotropic refinement (middle), and anisotropic refinement (right).}\label{fig:mildly singular}
\end{figure}

%%%%%%%%%%%%%%%%%%%%%%%%%%%%%%%%%%%%%%%%%%%%%%%%%%%%%%%%%%%%%%%%%%%%%%%%%%%%%%%%%%%%%%%%%%%%%%%%%%%%%%%%%%%%%%%%%%%
\subsection{Singular problem}\label{sec:singular}
%%%%%%%%%%%%%%%%%%%%%%%%%%%%%%%%%%%%%%%%%%%%%%%%%%%%%%%%%%%%%%%%%%%%%%%%%%%%%%%%%%%%%%%%%%%%%%%%%%%%%%%%%%%%%%%%%%%
Let $\Omega = (0,1)^2$ with initial condition $u_0 \equiv 0$ and Dirichlet data $u_D \equiv 1$. The solution to this problem is known to have
a strong singularity for $t  = 0$ due to the incompatibility of initial and boundary conditions, in addition to singularities in the four corners of the unit square.
The initial mesh $\PP_0$ is chosen as in Section~\ref{sec:smooth}.

Figure~\ref{fig:singular} displays the results.
The Faermann estimator $\eta_{\PP}(\Phi)$ and the $(h-h/2)$-estimator $\|\Phi - \widehat{\Phi}\|_{\mathscr{V}}$ show both the same sensible convergence behavior for this problem. For uniform refinement, they display the rate $\OO(N^{-1/8})$,
which is then improved by isotropic refinement to $\OO(N^{-1/4})$. Finally, for anistropic refinement, they achieve the best possible rate $\OO(N^{-15/22})$, recovering the rate for a smooth problem.
Looking at Figure~\ref{fig:mesh}, we see strong anisotropic refinement towards $t=0$ with elements of size $h_\x = 1, h_t =2^{-18}$, and some mild refinement towards the corners of the unit square.

On the other hand, the weighted $L_2$-terms $\zeta_{\PP}(\Phi)$ do not seem to decay for uniform or isotropic refinement, and seem to degenerate for anisotropic refinement. This is problematic for
the reliability bound in Corollary~\ref{cor:main}. Further inspection suggests that this is a theoretical shortcoming rather than a practical one.
This is hinted by the $(h-h/2)$-estimator, which one generally assumes to be reliable.
Note that this does not contradict the theoretical results from Corollary~\ref{cor:main}, which states $\zeta_\PP(\Phi)\lesssim\eta_\PP(\Phi)\lesssim \norm{\phi-\Phi}{H^{-1/2,-1/4}(\Sigma)}$ only under the additional  parabolic scaling assumption $h_t \eqsim h_\x^2$ for all space-time elements.

Under this parabolic scaling assumption, the optimal error decay rate for smooth problems  becomes $\OO(N^{-1/2})$; see~\eqref{eq:rates}. Figure~\ref{fig:singular-graded} displays the results
of uniform and adaptive refinement, with meshes that satisfy this scaling constraint\footnote{
For uniform refinement, all elements are bisected once in space-direction and three times in time-direction.
For adaptive refinement, we assume $\MM_\ell^\x=\MM_\ell^t$. All these marked elements are subdivided into four congruent rectangles, where we use additional bisections in space and/or time to guarantee that the level differences between elements sharing an edge is bounded by $1$ and that $\tfrac{1}{2} h_\x^2\le h_t \le 2h_\x^2$.}, providing convergence rates $\OO(N^{-0.18})$ and $\OO(N^{-0.4})$, respectively, for \emph{all} considered estimators.

\begin{figure}
\centering
\includegraphics[width=\linewidth]{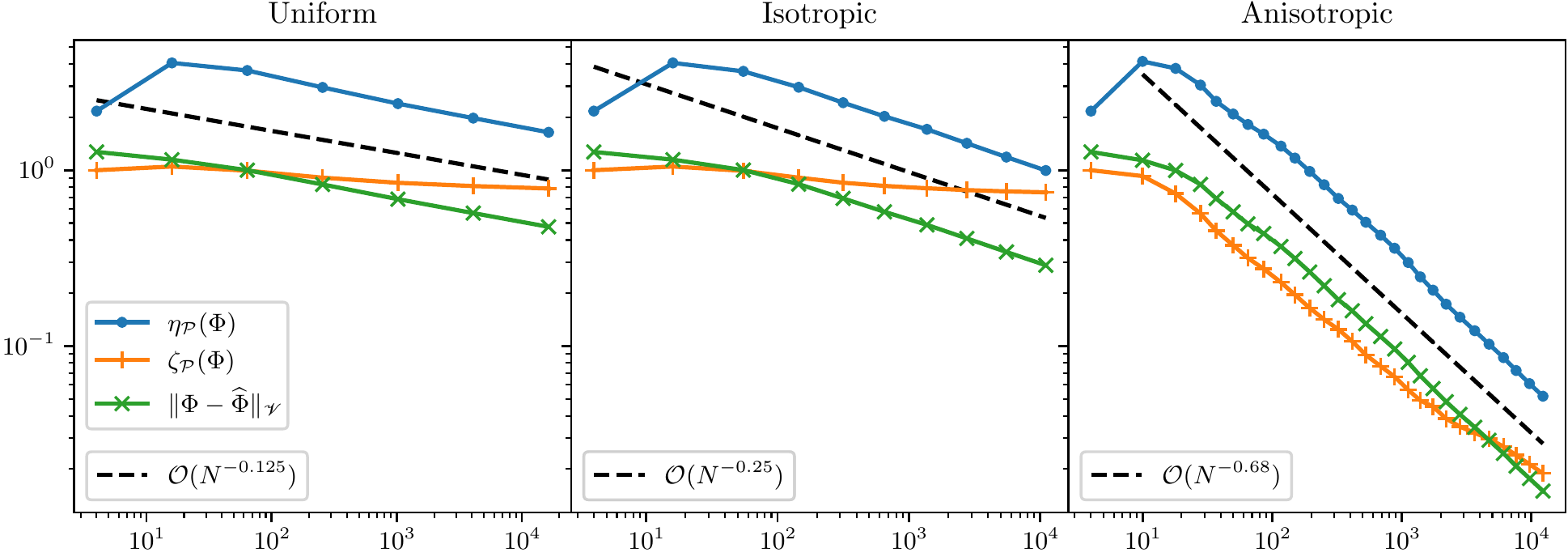}
\vspace{-1em}
\caption{
Error estimators for the singular problem of Section~\ref{sec:singular} plotted double-logarithmically over the degrees of freedom $N=\#\PP$:  uniform refinement (left), isotropic refinement (middle), and anisotropic refinement (right).
}\label{fig:singular}
\end{figure}
\begin{figure}
      \centering
  \begin{minipage}[t!]{0.38\linewidth}
    \includegraphics[width=\linewidth]{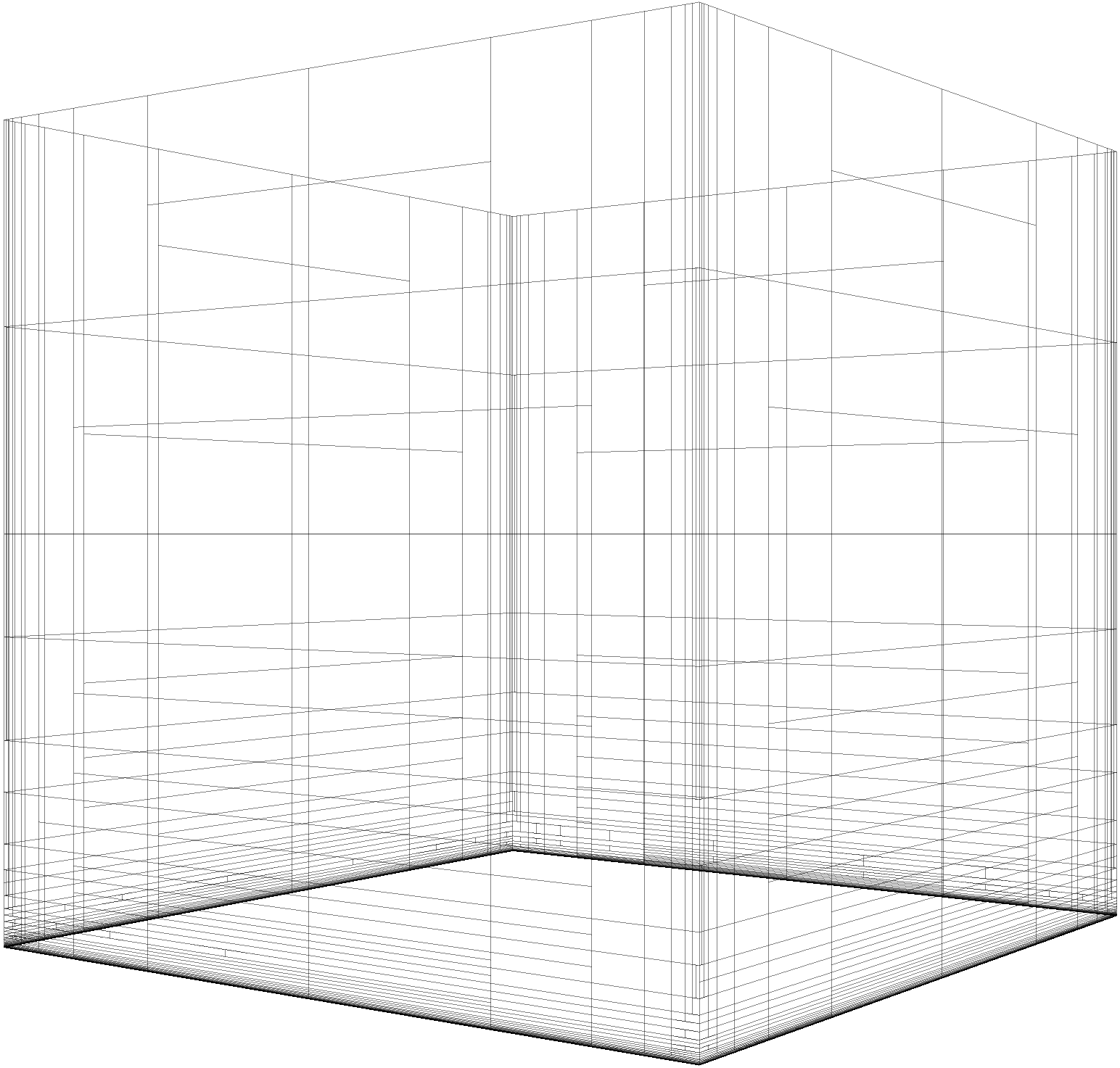}
      \captionsetup{width=\linewidth}
    \caption{Mesh with $N = 1391$ elements, generated by anisotropic refinement for the singular problem of Section~\ref{sec:singular}.}\label{fig:mesh}
  \end{minipage}
  \hfill
  \begin{minipage}[t!]{0.59\linewidth}
    \includegraphics[width=\linewidth]{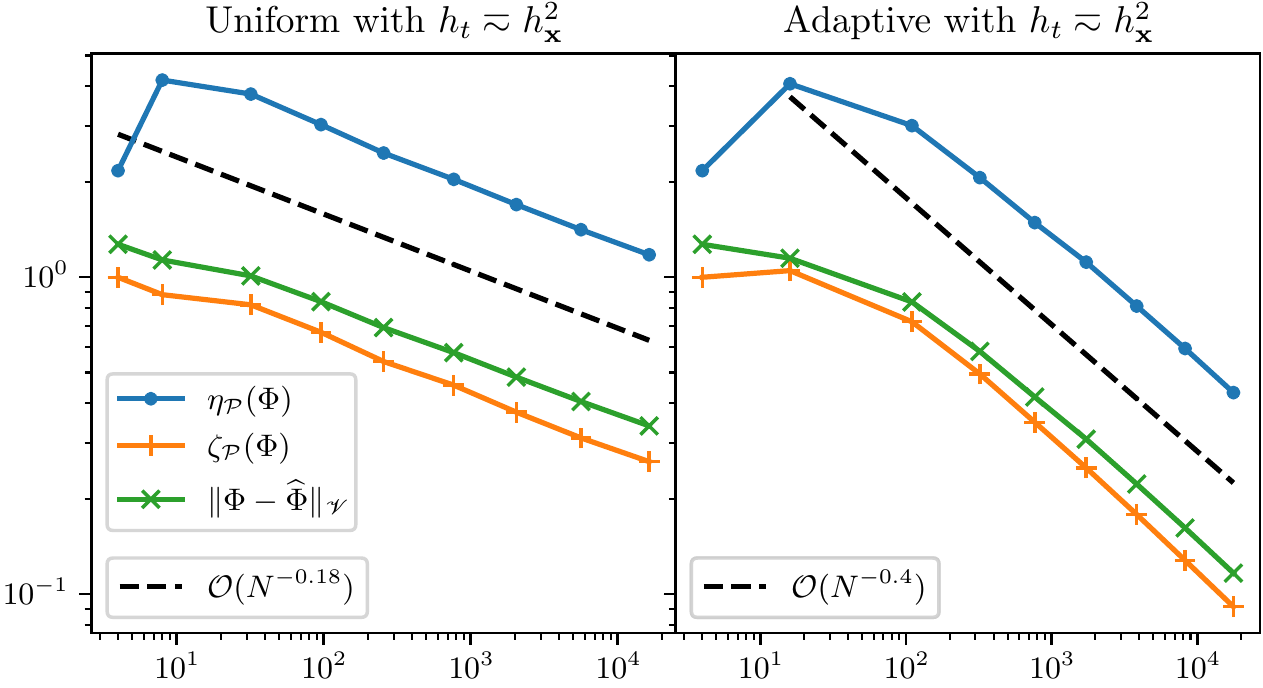}
    \caption{
Error estimators for the singular problem of Section~\ref{sec:singular} plotted double-logarithmically over the degrees of freedom $N=\#\PP$:  uniform refinement (left) and adaptive refinement (right) with parabolic scaling $h_t\eqsim h_\x^2$.} \label{fig:singular-graded}
  \end{minipage}
\end{figure}

%%%%%%%%%%%%%%%%%%%%%%%%%%%%%%%%%%%%%%%%%%%%%%%%%%%%%%%%%%%%%%%%%%%%%%%%%%%%%%%%%%%%%%%%%%%%%%%%%%%%%%%%%%%%%%%%%%%
\subsection{Singular L-shape problem}\label{sec:singular-lshape}
%%%%%%%%%%%%%%%%%%%%%%%%%%%%%%%%%%%%%%%%%%%%%%%%%%%%%%%%%%%%%%%%%%%%%%%%%%%%%%%%%%%%%%%%%%%%%%%%%%%%%%%%%%%%%%%%%%%
%\GG{Omdat dit probleem heel op het eerste singuliere probleem lijkt, stel ik voor om hier missichien ooit collocation te bekijken. Dan zeggen we gewoon dat alle anderen voorbeelden ook met collocation werken (not displayed).}
We consider the L-shaped domain $\Omega := (-1,1)^2 \setminus [-1,0]^2$ with data $u_0 \equiv 1$
and $u_D \equiv 0$. The solution has a strong singularity for $t = 0$, in addition to a singularity at the re-entrant corner $(0,0)$.
We choose $\PP_0:=\set{[0,1]\times K}{K\in\PP_\Gamma}$, with the uniform mesh $\PP_\Gamma$ of $\Gamma$ being aligned with the corners and consisting of eight elements,
as initial mesh of the space-time boundary $\Sigma$.
Figure~\ref{fig:singular-lshape} displays the results, which are similar to those of Section~\ref{sec:singular} with a better behavior of the weighted $L_2$-terms $\zeta_\PP(\Phi)$ for anisotropic refinement.

Enforcing the parabolic scaling $h_t\eqsim h_\x^2$ as in Section~\ref{sec:singular}, \emph{all} estimators converge again with the same rates, being $\mathcal O(N^{-0.18})$ for uniform refinement and $\mathcal O(N^{-0.45})$ for adaptive refinement (not displayed).

\begin{figure}
\centering
\includegraphics[width=\linewidth]{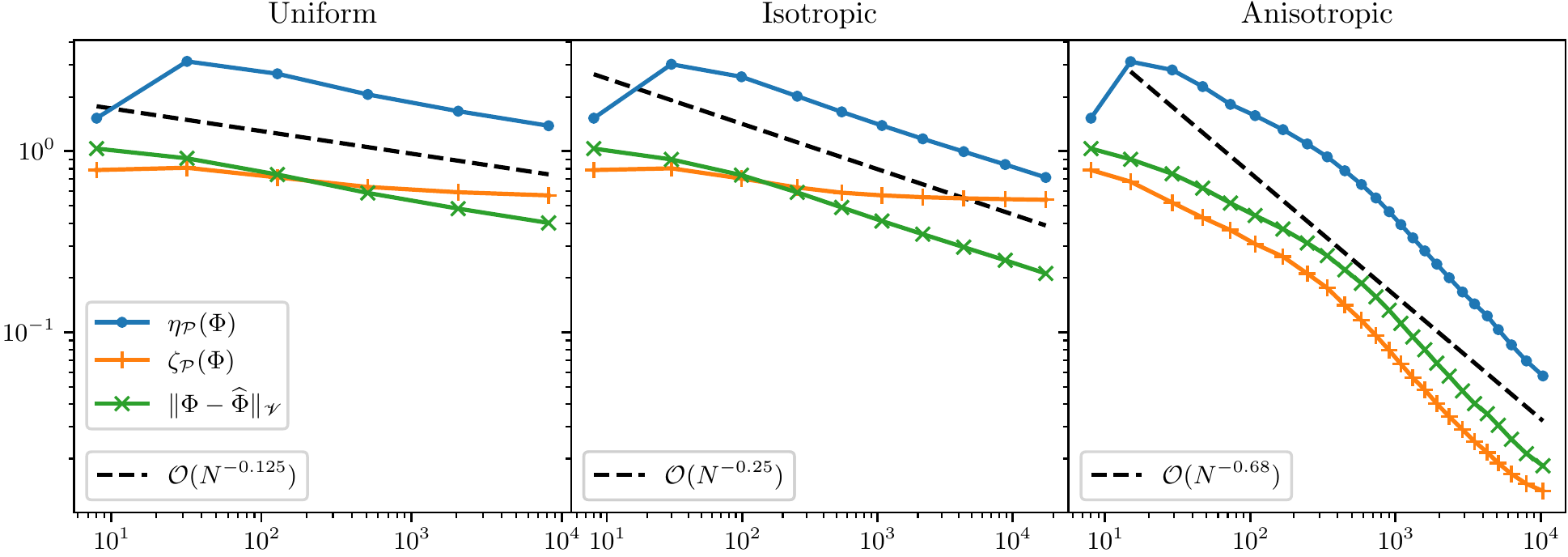}
\vspace{-1em}
\caption{Error estimators for the singular L-shape problem of Section~\ref{sec:singular-lshape} plotted double-logarithmically over the degrees of freedom $N=\#\PP$:  uniform refinement (left), isotropic refinement (middle), and anisotropic refinement (right).
}
\label{fig:singular-lshape}
\end{figure}

\appendix
%%%%%%%%%%%%%%%%%%%%%%%%%%%%%%%%%%%%%%%%%%%%%%%%%%%%%%%%%%%%%%%%%%%%%%%%%%%%%%%%%%%%%%%%%%%%%%%%%%%%%%%%%%%%%%%%%%%
%%%%%%%%%%%%%%%%%%%%%%%%%%%%%%%%%%%%%%%%%%%%%%%%%%%%%%%%%%%%%%%%%%%%%%%%%%%%%%%%%%%%%%%%%%%%%%%%%%%%%%%%%%%%%%%%%%%
\section{Numerical computation}\label{sec:computation}
%%%%%%%%%%%%%%%%%%%%%%%%%%%%%%%%%%%%%%%%%%%%%%%%%%%%%%%%%%%%%%%%%%%%%%%%%%%%%%%%%%%%%%%%%%%%%%%%%%%%%%%%%%%%%%%%%%%
%%%%%%%%%%%%%%%%%%%%%%%%%%%%%%%%%%%%%%%%%%%%%%%%%%%%%%%%%%%%%%%%%%%%%%%%%%%%%%%%%%%%%%%%%%%%%%%%%%%%%%%%%%%%%%%%%%%
Let $\Omega\subset\R^2$ be a simply connected Lipschitz domain and $\gamma:[0,L]\to \Gamma$ be a parametrization of its boundary $\Gamma=\partial\Omega$.
For a given prismatic mesh $\PP$, i.e., a quadrilateral mesh, of the space-time boundary $\Sigma$, we briefly explain how to numerically compute the corresponding Galerkin solution $\Phi$ of~\eqref{eq:Galerkin} in the trial space of piecewise constants with respect to $\PP$ as well as the corresponding error estimator $\eta_\PP(\Phi)$ and the weighted $L_2$-terms $\zeta_\PP(\Phi)$.
For all $t\in \overline I$, we assume that $\gamma$ is piecewise smooth with respect to the corresponding spatial mesh $\PP|_t$, where for simplicity $|\gamma'|=1$.

We start with the following analytic observations which will be used to integrate the involved integrals in time:
With the exponential integral $\Ei(x):=-\int_{-x}^\infty y^{-1} e^{-y} \d y$ and
\begin{align*}
 \mathfrak{g}_t(\x) := \begin{cases}\frac{1}{4\pi} \Ei\big(-\frac{|\x|^2}{4t}\big) \quad&\text{for }(t,\x)\in (0,\infty)\times\R^{2}, \\ 0 \quad&\text{else},\end{cases}
\end{align*}
it holds that
\begin{align}\label{eq:one time integral}
 \int_{\tilde a}^{\tilde b} G(t-s,\x) \d s = \mathfrak{g}_{t-\tilde b} (\x) - \mathfrak{g}_{t-\tilde a}(\x)
 \quad \text{for all }0\le \tilde a<\tilde b\text{ and }(t,\x)\in[0,\infty)\times\R^{2}.
\end{align}
With
 \begin{align*}
 \mathfrak{G}_t(\x) := \begin{cases}\frac{1}{4\pi} \Big(t e^{-\frac{|\x|^2}{4t}} + t(1+\frac{|\x|^2}{4t})\Ei\big(-\frac{|\x|^2}{4t}\big)\Big) \quad&\text{for }(t,\x)\in (0,\infty)\times\R^{2},
 \\ 0
 \quad&\text{else},\end{cases}
\end{align*}
it further holds that
\begin{align}\label{eq:two time integrals}
 \int_a^b  \int_{\tilde a}^{\tilde b} G(t-s,\x) \d s \d t = \mathfrak{G}_{b-\tilde b}(\x) - \mathfrak{G}_{b-\tilde a}(\x) + \mathfrak{G}_{a-\tilde a}(\x)  - \mathfrak{G}_{a-\tilde b}(\x);
\end{align}
for all $0\le a<b$, $0\le \tilde a<\tilde b$, and $\x\in\R^2$;
see, e.g., \cite{costabel90} or \cite{reinarz15} for more details.
As $\Ei(\cdot)-\log|\cdot|$ is smooth, $\mathfrak{g}_t$ as well as $\mathfrak{G}_t$ have a logarithmic singularity for $\x\to 0$ (provided they are not identically $0$).

%%%%%%%%%%%%%%%%%%%%%%%%%%%%%%%%%%%%%%%%%%%%%%%%%%%%%%%%%%%%%%%%%%%%%%%%%%%%%%%%%%%%%%%%%%%%%%%%%%%%%%%%%%%%%%%%%%%
\subsection{Galerkin discretization}
%%%%%%%%%%%%%%%%%%%%%%%%%%%%%%%%%%%%%%%%%%%%%%%%%%%%%%%%%%%%%%%%%%%%%%%%%%%%%%%%%%%%%%%%%%%%%%%%%%%%%%%%%%%%%%%%%%%
To compute the Galerkin discretization~\eqref{eq:Galerkin} for  the trial space $\XX$ of piecewise constants with respect to $\PP$, we have to compute
\begin{align*}
 \dual{\mathscr{V} \1_{\tilde J\times \tilde K}} {\1_{J\times K}}_\Sigma \quad \text{and}\quad \dual{f}{\1_{J\times K}}_\Sigma \quad \text{for all }J\times K, \tilde J\times \tilde K\in\PP.
\end{align*}
Let $J=[a,b]$, $\tilde J = [\tilde a, \tilde b]$, $K=\gamma([c,d])$, and $\tilde K=\gamma([\tilde c,\tilde d])$.
We abbreviate $\gamma_K(\hat x) := \gamma(c+\hat x(d-c))$ and $\gamma_{\tilde K} := \gamma(\tilde c + \hat y (\tilde d - \tilde c))$.

\subsubsection{Galerkin matrix}\label{sec:galerkin matrix}
The Fubini theorem and \eqref{eq:two time integrals} show that
\begin{align*}
  \dual{\mathscr{V} \1_{\tilde J\times \tilde K}} {\1_{J\times K}}_\Sigma = \int_K\int_{\tilde K} \mathfrak{G}_{b-\tilde b}(\x-\y) - \mathfrak{G}_{b-\tilde a}(\x-\y) + \mathfrak{G}_{a-\tilde a}(\x-\y)  - \mathfrak{G}_{a-\tilde b}(\x-\y) \d\y\d\x.
\end{align*}
To compute terms of the form $\int_K\int_{\tilde K} \mathfrak{G}_{t}(\x-\y)\d\y\d\x$ for $t>0$, we first use the transformation formula
\begin{align*}
 \int_K\int_{\tilde K}  \mathfrak{G}_t(\x-\y) \d\y\d\x = (d-c) (\tilde d - \tilde c) \int_0^1\int_0^1 \mathfrak{G}_t\big(\gamma_K(\hat x)-\gamma_{\tilde K}(\hat y)\big) \d\hat y\d\hat x.
\end{align*}

If $K\cap \tilde K=\emptyset$, the integrand $F(\hat x,\hat y):=\mathfrak{G}_t(\gamma_K(\hat x)-\gamma_{\tilde K}(\hat y))$ is smooth and we can use standard Gauss quadrature in both directions.

If $K\cap \tilde K\neq\emptyset$, we assume without loss of generality that $K=\tilde K$ or that $K$ and $\tilde K$ intersect only in one point, i.e., $\#(K\cap \tilde K) = 1$, otherwise we can just split $K$ and $\tilde K$.

If $K=\tilde K$, the integrand has a logarithmic singularity along the diagonal $\hat x=\hat y$.
More precisely, it has the form $F(\hat x,\hat y)=f_1(\hat x,\hat y) + f_2(\hat x,\hat y) \log(|\hat x - \hat y|^2)$ with smooth functions $f_1$ and $f_2$.
We employ the Duffy transformations $\tau_1(\hat x,\hat y) := (\hat x,\hat x\hat y )$ and $\tau_2(\hat x, \hat y) := (\hat x \hat y, \hat x)$, which both map the (open) unit square bijectively onto some (open) triangle, where $[0,1]^2=\bigcup_{i=1}^2\tau_i([0,1]^2)$ with intersection of measure zero between the sets.
As $|\det (D \tau_i(\hat x,\hat y))| = \hat x$ for  $i=1,2$, the integral can be written as
\begin{align*}
 \int_0^1\int_0^1
 F(\hat x,\hat y,\hat z) \d \hat y \d \hat x
 = \int_0^1 \int_0^1 \hat x \sum_{i=1}^2 F\big(\tau_i(\hat x,\hat y)\big) \d \hat y \d \hat x.
\end{align*}
The final integrand of the form $\tilde f_1(\hat x,\hat y) + \tilde f_2(\hat x,\hat y) \log(\hat x) +\tilde f_3(\hat x,\hat y)\log(\hat y)$ with smooth functions $\tilde f_1$, $\tilde f_2$, and $\tilde f_3$, 
 and we can use the quadrature from \cite{smith00} in both $\hat x$- and $\hat y$-direction.

Finally, if $\#(K\cap \tilde K) = 1$, the integrand has a logarithmic singularity at $(\hat x,\hat y)=(0,1)$ or $(\hat x,\hat y)=(1,0)$.
Without loss of generality, we suppose that the singularity is at $(\hat x,\hat y)=(0,1)$ so that the integrand has the form $F(\hat x,\hat y)=f_1(\hat x,\hat y) + f_2(\hat x,\hat y) \log(|1+ \hat x - \hat y|^2)$ with smooth functions $f_1$ and $f_2$.
We rotate the integration domain by $\pi/2$, i.e., $(\hat x,\hat y)\mapsto (\hat y,1-\hat x)$, which transforms the singularity to $(\hat x,\hat y)=(0,0)$, and then employ the same transformations as for the case $K=\tilde K$.
Similar as before, the final integrand is of the form $\tilde f_1(\hat x,\hat y) + \tilde f_2(\hat x,\hat y) \log(\hat x)$ with smooth functions $\tilde f_1$ and $\tilde f_2$, 
and we can use the quadrature from \cite{smith00} in $\hat x$-direction and standard Gauss quadrature in $\hat y$-direction.

We remark that $\int_K\int_{\tilde K} \mathfrak{G}_{t}(\x-\y)\d\y\d\x$ can even be computed exactly if $K$ and $\tilde K$ lie both on \emph{one} straight line.

\subsubsection{Right-hand side vector}\label{sec:rhs vector}
We consider the indirect boundary element method from Section~\ref{sec:integral equations} so that $f=u_D-\mathscr{M}_0 u_0$.
Provided that $u_D$ is $\PP$-piecewise smooth, the term $\dual{u_D}{\1_{J\times K}}_\Sigma$ can be easily computed by first transforming $J\times K$ onto $[0,1]^2$ and subsequently applying Gauss quadrature in both directions.
For $\mathscr{M}_0 u_0$ we employ the Fubini theorem and~\eqref{eq:one time integral},
\begin{align*}
 \dual{\mathscr{M}_0 u_0}{\1_{J\times K}} = \int_K\int_\Omega \big(\mathfrak{g}_a(\x-\y) - \mathfrak{g}_b(\x-\y)\big) u_0(\y)\d\y\d\x.
\end{align*}
The integrand has a logarithmic singularity for $\y\in\partial\Omega$.

Let $\TT_K$ be a partition of $\Omega$ into curvilinear triangles of the form $T=\gamma_T(\hat T)$
with the reference triangle $\hat T=\set{(\hat y,\hat z)\in[0,1]^2}{\hat z\le1- \hat y}$ and some smooth diffeomorphism $\gamma_T:\hat T\to T$ such that for all $T,\tilde T\in\TT_K$ with $T\neq \tilde T \in\TT_K$, the intersection has measure zero.
Moreover, we suppose that there exists a unique element $T\in\TT_K$ with $K\cap T= K$.
With $t\in\{a,b\}$ and the abbreviation $\tilde u_{0,T}:=(u_0\circ\gamma_T) |\det (D\gamma_T)|$, we have that
\begin{align*}
\int_K\int_\Omega \mathfrak{g}_t(\x-\y) u_0(\y) \d\y\d\x
= (d-c) \sum_{T\in\TT_K} \int_0^1\int_{\hat T} \mathfrak{g}_t\big(\gamma_K(\hat x)-\gamma_T(\hat y,\hat z)\big) \, \tilde u_{0,T}(\hat y,\hat z)\d \hat y \d\hat z  \d \hat x.
\end{align*}

\begin{remark}
To construct $\TT_K$ in our examples from Section~\ref{sec:numerics}, we start from some initial mesh of $\Omega$ consisting of one square for $\Omega=(0,1)^2$ and three squares for $\Omega := (-1,1)^2 \setminus [-1,0]^2$, and proceed as follows:
    First, we dyadically refine the element that contains $K$ until $K$ becomes the edge of one of the resulting squares.
We use further dyadic refinements to ensure that there is at most one hanging node per edge.
To obtain a triangular mesh $\TT_K$, we finally bisect the elements along one diagonal.
Note that the resulting $\TT_K$ is \emph{not} conforming.
The number of elements in $\TT_K$ is proportional to the level of $K$.
\end{remark}

If $K\cap T=\emptyset$, the integrand $F(\hat x,\hat y,\hat z):=\mathfrak{g}_t(\gamma_K(\hat x)-\gamma_T(\hat y,\hat z)) \, \tilde u_{0,T}(\hat y,\hat z)$ is smooth and we can use standard Gauss quadrature in all three directions.

If $K\cap T=K$, we suppose that $\gamma_K=\gamma_T(\cdot,0)$ so that the integrand has a logarithmic singularity for $(\hat x,\hat z)=(\hat y,0)$.
More precisely, it has the form
\begin{align*}
F(\hat x,\hat y,\hat z)=f_1(\hat x,\hat y,\hat z) + f_2(\hat x,\hat y,\hat z)\log(|{\bf F}(\hat x,\hat y,\hat z)(\hat x-\hat y, \hat z)^\top|^2)
\end{align*}
 for some smooth functions $f_1$, $f_2$ with values in $\R$, and ${\bf F}$ with values in $\R^{2\times 2}$ and $\det {\bf F}\neq0$.
We employ the following transformations
\begin{align*}
 \tau_1(\hat x,\hat y,\hat z) &:= \big(\hat x,\hat x(1-\hat y),\hat x\hat y\hat z \big),
 \\
 \tau_2(\hat x,\hat y,\hat z) &:= \big(\hat x(1-\hat y), \hat x(1-\hat y\hat z),\hat x\hat y \hat z \big),
 \\
 \tau_3(\hat x,\hat y,\hat z) &:= \big(\hat x(1-\hat y +\hat y\hat z), \hat x (1-\hat y),\hat x\hat y\big),
\end{align*}
which all map the (open) unit cube bijectively onto some (open) tetrahedron, where $[0,1]\times \hat T=\bigcup_{i=1}^3\tau_i([0,1]^3)$ with intersection of measure zero between the sets.
As $|\det (D \tau_i(\hat x,\hat y,\hat z))| = \hat x^2 \hat y$ for  $i=1,2,3$, the integral can be written as
\begin{align*}
 \int_0^1\int_{\hat T}
F(\hat x,\hat y,\hat z) \d \hat y \d\hat z  \d \hat x
 = \int_0^1 \int_0^1 \int_0^1 \hat x^2 \hat y \sum_{i=1}^3 F\big(\tau_i(\hat x,\hat y,\hat z)\big) \d \hat y \d \hat z \d \hat x.
\end{align*}
Note that the vector $(\hat x-\hat y,\hat z)^\top$ from the definition of $F$ is transformed under $\tau_1$, $\tau_2$, and $\tau_3$ to $\hat x\hat y(1,z)^\top$, $\hat x\hat y(z-1,z)^\top$, and $\hat x\hat y(z,1)^\top$, respectively.
We infer that we can use the quadrature from \cite{smith00} in $\hat x$- and $\hat y$-direction, and standard Gauss quadrature in $\hat z$-direction.

If $\#(K\cap T)=1$, we suppose that $\gamma_K(0)=\gamma_T(0,0)$.
We further suppose that $K$ and $T$ can be parametrized via \emph{one} smooth diffeomorphism $\gamma_{K\cup T}$:
The parameter domain of $\gamma_{K\cup T}$ should contain at least the line $\set{\hat x (\hat v,\hat w)}{\hat x \in [0,1]}$ for some $(\hat v,\hat w)\in\R^2\setminus [0,\infty)^2$ and $\hat T$.
Moreover, $\gamma_K(\hat x)=\gamma_{K\cup T}(\hat x (\hat v,\hat w))$  for $\hat x\in[0,1]$ and $\gamma_T(\hat y,\hat z) = \gamma_{K\cup T}(\hat y,\hat z)$ for $(\hat y,\hat z)\in\hat T$.
Then the integrand has a logarithmic singularity for $(\hat x,\hat y,\hat z)=(0,0,0)$.
More precisely, it has the form
\begin{align*}
F(\hat x,\hat y,\hat z):=f_1(\hat x,\hat y,\hat z) + f_2(\hat x,\hat y,\hat z)\log(|{\bf F}(\hat x,\hat y,\hat z)(\hat x \hat v -\hat y,\hat x \hat w -\hat z)^\top|^2)
\end{align*}
 for some smooth functions $f_1$, $f_2$ with values in $\R$, and ${\bf F}$ with values in $\R^{2\times 2}$ and $\det {\bf F}\neq0$.
Applying the transformations $\tau_i$ from before, the vector $(\hat x \hat v -\hat y,\hat x \hat w -\hat z)^\top$ from the definition of $F$ is transformed to
$\hat x( (\hat v,\hat w)^\top-(1-\hat y, \hat y\hat z)^\top)$,
$\hat x((1-\hat y)(v,w)^\top - (1-yz, yz)^\top)$,
and $\hat x((1-\hat y+\hat y\hat z)(v,w)^\top - (1-y,y)^\top)$, respectively.
Due to our assumption on $(v,w)$, up to the factor $\hat x$, each of these terms is uniformly away from $(0,0)^\top$ for all $(\hat y,\hat z)\in\hat T$.
Thus, we can again use the quadrature from \cite{smith00} in $\hat x$-direction, and standard Gauss quadrature in $\hat y$ and  $\hat z$-direction.

%%%%%%%%%%%%%%%%%%%%%%%%%%%%%%%%%%%%%%%%%%%%%%%%%%%%%%%%%%%%%%%%%%%%%%%%%%%%%%%%%%%%%%%%%%%%%%%%%%%%%%%%%%%%%%%%%%%
\subsection{Evaluation of residual}
%%%%%%%%%%%%%%%%%%%%%%%%%%%%%%%%%%%%%%%%%%%%%%%%%%%%%%%%%%%%%%%%%%%%%%%%%%%%%%%%%%%%%%%%%%%%%%%%%%%%%%%%%%%%%%%%%%%
To compute the error estimator $\eta_\PP(\Phi)$ as well as the weighted $L_2$-terms $\zeta_\PP(\Phi)$, we have to evaluate the residual $f-\mathscr{V}\Phi$.

\subsubsection{Single-layer operator}
To evaluate the single-layer operator for piecewise constants with respect to $\PP$, we have to compute
\begin{align*}
 (\mathscr{V} \1_{\tilde J\times \tilde K})(t,\x) = \int_{\tilde J}\int_{\tilde K} G(t-s,\x-\y) \d \y\d s  \quad \text{for all }\tilde J\times \tilde K\in\PP \text{ and all } (t,\x)\in\Sigma.
\end{align*}
Let $\tilde J = [\tilde a, \tilde b]$ and $\tilde K=\gamma([\tilde c,\tilde d])$, and abbreviate again $\gamma_{\tilde K} := \gamma(\tilde c + \hat y (\tilde d - \tilde c))$.
The Fubini theorem and \eqref{eq:one time integral} show that
\begin{align*}
 (\mathscr{V} \1_{\tilde J\times \tilde K})(t,\x) = \int_{\tilde K} \mathfrak{g}_{t-\tilde b}(\x-\y) - \mathfrak{g}_{t-\tilde a}(\x-\y) \d\y.
\end{align*}
To compute terms of the form $\int_{\tilde K} \mathfrak{g}_{s}(\x-\y)\d\y$ for $s>0$, we first use the transformation formula
\begin{align*}
  \int_{\tilde K} \mathfrak{g}_{s}(\x-\y) \d\y = \int_0^1 \mathfrak{g}_s\big(\x-\gamma_{\tilde K}(\hat y)\big) \d\hat y.
\end{align*}

If $\x\not\in \tilde K$, the integrand is smooth and we can use standard Gauss quadrature.

If $\x\in\tilde K$, we assume without loss of generality that $\x=\gamma(\tilde c)$ or $\x=\gamma(\tilde d)$, otherwise we can just split $\tilde K$.
The integrand is of the form $f_1(\hat y) + f_2(\hat y) \log(\hat y)$ with smooth functions $f_1$ and $f_2$,  and we can use the quadrature from \cite{smith00}.

We remark that $\int_{\tilde K} \mathfrak{g}_{s}(\x-\y) \d\y$ can even be computed exactly  if $\x$ and $\tilde K$ lie both on \emph{one} straight line.

\subsubsection{Initial operator}
We consider the indirect boundary element method from Section~\ref{sec:integral equations} so that $f=u_D-\mathscr{M}_0 u_0$.
To evaluate $f$ at $(t,\x)\in J\times K$ with $t>0$ and $J\times K\in\PP$, let $\TT_K$ be again a curvilinear triangulation of $\Omega$ as in Appendix~\ref{sec:rhs vector}.
With the abbreviation $\tilde u_{0,T}:=(u_0\circ\gamma_T) |\det (D\gamma_T)|$, we have that
\begin{align*}
 (\mathscr{M}_0 u_0)(t,\x) = \sum_{T\in\TT_K} \int_0^1\int_0^1 G\big(t,\x-\gamma_T(\hat y,\hat z)\big)\, \tilde u_{0,T}(\hat y,\hat z) \d\hat y\d\hat z.
\end{align*}
As $\x\neq\gamma_T(\hat y,\hat z)$, the integrand is smooth and we can use standard Gauss quadrature in both directions.

%%%%%%%%%%%%%%%%%%%%%%%%%%%%%%%%%%%%%%%%%%%%%%%%%%%%%%%%%%%%%%%%%%%%%%%%%%%%%%%%%%%%%%%%%%%%%%%%%%%%%%%%%%%%%%%%%%%
\subsection{Error estimator and $L_2$-terms}
%%%%%%%%%%%%%%%%%%%%%%%%%%%%%%%%%%%%%%%%%%%%%%%%%%%%%%%%%%%%%%%%%%%%%%%%%%%%%%%%%%%%%%%%%%%%%%%%%%%%%%%%%%%%%%%%%%%
Now that we can evaluate the residual $r:=f-\mathscr{V}\Phi$, we explain how to compute the estimator $\eta_\PP(\Phi)$ as well as the weighted $L_2$-terms $\zeta_\PP(\Phi)$.
We assume that $r$ is, at least $\PP$-piecewise, sufficiently smooth.
In particular, $\zeta_\PP(\Phi)$ can be easily computed via the transformation formula and standard Gauss quadrature in both directions.

For $\eta_\PP(\Phi)$, we need to compute terms of the form $|r|_{L_2(J,H^{1/2}(K\cup\tilde K))}$ and $|r|_{H^{1/4}(J\cup\tilde J,L_2(K))}$ with $K\cap \tilde K \neq\emptyset$ and $J\cap \tilde J\neq\emptyset$.

The first term reads as
\begin{align*}
 |r|_{L_2(J,H^{1/2}(K\cup\tilde K))}^2 = \int_J |r(t,\cdot)|_{H^{1/2}(K)}^2 + 2 \int_K\int_{\tilde K} \frac{|r(t,\x)-r(t,\y)|^2}{|\x-\y|^2}\d \y\d \x+ |r(t,\cdot)|_{H^{1/2}(\tilde K)}^2 \d t.
\end{align*}
We consider the integrand for fixed $t$.
The first and last term can be transformed as in the case $K=\tilde K$ of Section~\ref{sec:galerkin matrix} and subsequently be computed by standard Gauss quadrature in both directions.
Similarly, the middle term can be transformed as in the case $\# (K\cap\tilde K)=1$ of Section~\ref{sec:galerkin matrix} and subsequently be computed by standard Gauss quadrature in both directions.
Finally, we use standard Gauss quadrature in $t$-direction for all three terms.

Now, we consider
\begin{align*}
 |r|_{H^{1/4}(J\cup\tilde J,L_2(K))}^2
 = \int_K  |r(\cdot,\x)|_{H^{1/4}(J)}^2 + 2\int_J\int_{\tilde J} \frac{|r(t,\x)-r(s,\x)|^2}{|t-s|^{3/2}}\d s \d t+ |r(\cdot,\x)|_{H^{1/4}(\tilde J)}^2 \d \x.
\end{align*}
The first and last term can be transformed as in the case $K=\tilde K$ of Section~\ref{sec:galerkin matrix}.
With $\gamma_{J}$ defined analogously as $\gamma_K$, this shows for the first one that
\begin{align*}
 |r(\cdot,\x)|_{H^{1/4}({J})}^2
 &=  2|{J}|^2\int_0^1\int_0^1 \frac{|r(\gamma_{J}(\hat{t}),\x)-r(\gamma_{J}(\hat{t}\hat{s}),\x)|^2}{|\gamma_{J}(\hat{t})-\gamma_{J}(\hat{t}\hat{s})|^{3/2}} \,\hat{t} \d \hat{s}\d \hat{t}
 \\
 &= 2|{J}|^{1/2}\int_0^1\int_0^1 \frac{|r(\gamma_{J}(\hat{t}),\x)-r(\gamma_{J}(\hat{t}(1-\hat{s})),\x)|^2}{\hat{s}} \, \hat{s}^{-1/2}\,\hat{t}^{-1/2}  \d \hat{s}\d \hat{t}.
\end{align*}
As $r$ is piecewise smooth, the quotient is smooth and we can use Gauss quadrature with weight $\hat{t}^{-1/2}$ in $\hat t$-direction and with weight $\hat{s}^{-1/2}$ in $\hat s$-direction.
Similarly, the second term can be transformed as in the case $\# (K\cap \tilde K)=1$ of Section~\ref{sec:galerkin matrix} so that for $\tilde J$ left from $J$ and $\gamma_J,\gamma_{\tilde J}$ defined analogously as $\gamma_K,\gamma_{\tilde K}$, we get that
\begin{align*}
 \int_J\int_{\tilde J} \frac{|r(t,\x)-r(s,\x)|^2}{|t-s|^{3/2}}\d s \d t&= |J|\,|\tilde J| \int_0^1\int_0^1 \frac{|r(\gamma_J(\hat s\hat t),\x)-r(\gamma_{\tilde J}(1-\hat t),\x)|^2}{|\gamma_J(\hat s\hat t)-\gamma_{\tilde J}(1-\hat t)|^{3/2}}\hat t\d \hat s \d \hat t
 \\
 &\quad +  |J|\,|\tilde J| \int_0^1\int_0^1 \frac{|r(\gamma_J(\hat t),\x)-r(\gamma_{\tilde J}(1-\hat s\hat t),\x)|^2}{|\gamma_J(\hat t)-\gamma_{\tilde J}(1-\hat s\hat t)|^{3/2}}\hat t\d \hat s \d \hat t.
 \end{align*}
We can apply Gauss quadrature with weight $\hat t^{-1/2}$ in $\hat t$-direction and with weight $1$ in $\hat s$-direction.
Finally, we use the transformation formula and standard Gauss quadrature in $\hat x$-direction for all three terms.

% Acknowledgement
\section*{Acknowledgement} The first author has been supported by the Austrian Science Fund (FWF) under grant J4379-N.
The second author has been supported by the Netherlands Organization for Scientific Research (NWO) under contract.\ no.\ 613.001.652.

% Bibliography
\bibliographystyle{alpha}
\bibliography{literature}

\end{document}